\numberwithin{equation}{section}
\setlist{nosep}
\setlist{noitemsep}
\newcommand{\R}{\mathbb{R}}
\newtheorem{theo}{Theorem}
\newtheorem{prop}{Proposition}[section]
\newtheorem{lem}[prop]{Lemma}
\newtheorem{coro}[prop]{Corollary}
\newtheorem{remark}[prop]{Remark}
\theoremstyle{plain}
\theoremstyle{definition}
\newtheorem{defi}[prop]{Definition}
\def \t0{\rightarrow 0} 
\def \be{\begin{equation}}
\def \ee{\end{equation}}
\def \hal{\frac{1}{2}}
\def \div{\mathrm{div} \,} 
\def \1{\mathbf{1}} 
\def \ep{\varepsilon}
\def\a{\alpha}
\def\drd{\delta_{\R^{\d}}}
\def\nab{\nabla}
\def\({\left(}
\def\){\right)}
\def\XXint#1#2#3{{\setbox0=\hbox{$#1{#2#3}{\int}$}
     \vcenter{\hbox{$#2#3$}}\kern-.5\wd0}}
\def \XN{X_N}
\def \FN{F_N}
\def\g{\mathsf{g}}
\def\d{\mathsf{d}}
\def\F{\mathsf{F}}
\def\indic{\mathbf{1}}
\def\namedlabel#1#2{\begingroup
    #2%
    \def\@currentlabel{#2}%
    \phantomsection\label{#1}\endgroup
}
\def \epsilon{\varepsilon}
\def \f{\mathsf{f}}
\def \veta{\vec{\eta}}
\def \rr{\mathsf{r}}
\def \vecr{\vec{\rr}}
\def\k{\mathsf{k}}
\def\s{\mathsf{s}}
\def\c{\mathsf{c}_{\d,\s}}
\def\cd{\mathsf{c}_\d}
\def\yg{|z|^\gamma}
\def\a{\alpha}
\def \f{\mathsf{f}}
\newcommand{\Hc}{\mathcal H}
\newcommand{\Pc}{\mathcal P}
\begin{document}
\title[Mean Field Limit for Coulomb Flows]{Mean Field Limit for Coulomb-Type Flows}
\author{Sylvia Serfaty, appendix with Mitia Duerinckx}
\date{}
\maketitle

\begin{abstract}
We establish the mean-field convergence for systems of points evolving along the gradient flow of their interaction energy when the interaction is the Coulomb potential or a super-coulombic Riesz potential,  for the first time in arbitrary dimension.  The proof is based on a modulated energy method using  a Coulomb or Riesz distance, assumes that the solutions of the limiting equation are regular enough and exploits a weak-strong stability property for them.
The method can handle the addition of a regular interaction kernel, and
applies also to  conservative and mixed   flows.
In the appendix, it is also adapted to prove the mean-field convergence of the solutions to Newton's law with Coulomb or Riesz interaction in the monokinetic case to  solutions  of an Euler-Poisson type system.
\end{abstract}

\section{Introduction}

The derivation of effective equations for  classical interacting many body systems is  an important question  in mathematical physics. Within it,  one of the most important problems, fundamental for plasma physics,  is the  derivation of the Vlasov-Poisson equation from Newton's law for $N$ particles i.e.~a system of the form
 \be\label{new0}
\left\{\begin{array}{l}
\dot x_i=v_i\\
\dot v_i= \displaystyle\frac{1}{N}\sum_{i\neq j}K(x_i,x_j) , \quad i=1, \dots, N\\ [3mm]
x_i(0)=x_i^0\quad v_i(0)= v_i^0\end{array}\right.\ee
where the pair interaction $K$ derives from the Coulomb potential, and 
 is still open in its full generality. 
 The large   $N $ or mean-field limit of first order systems of the form 
 \be\label{new0}
\left\{\begin{array}{l}
\dot x_i= \displaystyle\frac{1}{N}\sum_{i\neq j}K(x_i,x_j) , \quad i=1, \dots, N\\ [3mm]
x_i(0)=x_i^0 \end{array}\right.\ee
 is also a natural and interesting question.  Such systems can model interacting particles in physics (for instance the point vortex system in two-dimensional fluids), from a numerical  point of view they  correspond to particle approximations of PDEs, or in the case of gradient flows can serve to approximate  their equilibrium states. Motivations extend to biological and sociological sciences, including phenomena of flocking, swarming and aggregation
 (see for instance \cite{choicarrillohauray}) and the analysis of large neural networks in biology \cite{bft} and in machine learning  \cite{mmn,rve,bc}.  
 
In the above problems, the cases where $K$ is regular are well understood, while in contrast those where $K$ is singular are the most difficult and least understood. In this paper we will be particularly focusing on the case where $K$ corresponds to the Coulomb interaction (and some generalizations), arguably the most important case for physics. 
For  further details on the general mathematical aspects of mean-field limits, we refer to the reviews \cite{golse,jabin}.

 \smallskip

The rest of the introduction is structured as follows: in Section \ref{1.1} we introduce the exact equations that we will study as well as their limiting effective evolution equations, and describe the state of the art on such questions.
In Section \ref{1.2} we state the main result, in Section \ref{1.3} we comment on the assumptions, and in Section \ref{method} we give an extended proof outline for the Coulomb case.
Finally, Section \ref{introsupp} is an added section explaining how to treat the case of  gradient flow evolutions with thermal noise.

 \subsection{Problem and background}\label{1.1}
In this paper we consider  specifically systems with Coulomb, logarithmic or Riesz interaction with kernel deriving from
 \be \label{formeg}
\g(x)=|x|^{-\s} \quad \max(\d-2,0) \le \s<\d \qquad \text{for any } \d\ge 1 
\ee
or 
\be \label{glog}
\g(x)= -\log |x|\qquad \text{for }  \d=1 \ \text{or} \ 2 ,\ee where $\d$ is the dimension. 
In the case \eqref{formeg} with  $\s=\d-2$  and $\d\ge 3 $, or \eqref{glog} and $\d=2$, $\g$ is  exactly (a multiple of) the Coulomb kernel. In the other cases of \eqref{formeg} it is called a Riesz kernel. In some cases, we may add to the interaction force a regular part denoted $\F$.

We will consider first order dynamics 
 of the form  
\be \label{mfgf}
\left\{\begin{array}{l}
\dot x_i=-\displaystyle\frac{1}{N} \(\nab_{x_i}\mathcal H_N(x_1, \dots, x_N)+\sum_{j\neq i} \F(x_i-x_j)\), \quad i=1, \dots, N\\ [3mm]
x_i(0)=x_i^0\end{array}\right.\ee
or conservative evolutions of the form 
\be \label{cons}
\left\{ \begin{array}{l}
\dot x_i= \displaystyle - \frac{1}{N} \mathbb{J}\nab_{x_i}\mathcal H_N (x_1, \dots, x_N),\quad i=1, \dots, N\\ [3mm]
x_i(0)=x_i^0\end{array}\right.\ee
where $\mathbb{J}$ is an antisymmetric matrix, the points $x_i$ evolve in the whole space $\R^{\d}$ and their energy $\mathcal H_N$ is given by 
\be\label{1.3} \mathcal H_N(x_1, \dots, x_N)= \sum_{i\neq j} \g(x_i-x_j),\ee with $\g$ as above. The map $\F:\R^\d \to \R^\d$ is the additional interaction force that we can add in the dissipative case to illustrate the robustness of the method, we will require it to enjoy some Sobolev and H\"older  regularity.

Mixed flows of the form 
\be \label{mix}\dot x_i= - \frac{1}{N} \(\alpha I + \beta \mathbb{J}\) \(
\nab_{x_i}\mathcal H_N (x_1, \dots, x_N)+\sum_{j\neq i} \F(x_i-x_j)\) \qquad \alpha>0 \ee
can be treated with exactly the same proof, so can  the same dynamics \eqref{mfgf}, \eqref{cons} or \eqref{mix} with an additional forcing $\frac{1}{N}\sum_{i=1}^N \mathsf{V}(x_i) $ with $\mathsf{V}$ Lipschitz. These generalizations are left  to the reader, see in particular Remark \ref{remV}.

Studying the same evolutions with added noise 
\be \label{mfgfnoise}
d x_i=-\displaystyle\frac{1}{N} \(\nab_{x_i}\mathcal H_N(x_1, \dots, x_N)+\sum_{j\neq i} \F(x_i-x_j)\) dt + \sqrt{2 \theta} dW_i, \ee or 
\be \label{consnoise}
d x_i=-\displaystyle\frac{1}{N} \mathbb{J} \nab_{x_i}\mathcal H_N(x_1, \dots, x_N)dt + \sqrt{2 \theta} dW_i, \ee
with $dW_i$ being $N$ independent Brownian motions and $\theta \ge 0$ a temperature,
 is also very interesting and has motivations from Random Matrix Theory, it  is done in particular in  \cite{jabinwang,jabinwang2,bo,fhm,bcf,lly} (see also references therein). We will comment further on this at the end of the introduction in Section \ref{introsupp}.

In the appendix, we  consider the second order system corresponding to Newton's law for the energy $\mathcal H_N$
\begin{gather}\label{eq:MFL00}
\begin{cases}
\dot x_{i}=v_{i},\\
\dot v_{i}=-\frac1N\nabla_{x_i}\Hc_N(x_1,\ldots,x_N),\\
x_{i}(0)=x_{i}^0,\quad v_i(0)=v_{i}^0,
\end{cases}\qquad i=1,\ldots,N
\end{gather}
in the so-called monokinetic case. In the Coulomb case it is the true physics model for plasmas.

\smallskip

Consider the {\it empirical measure} 
\be\label{munt}
\mu_N^t:= \frac1N \sum_{i=1}^N \delta_{x_i^t}\ee  associated to a solution $X_N^t:=(x_1^t, \dots, x_N^t)$ of the flow \eqref{mfgf} or \eqref{cons}. If the points $x_i^0$, which themselves depend on $N$, are such that $\mu_N^0$   converges  to some regular measure $\mu^0$, then a formal derivation leads to expecting that for $t>0$, $\mu_N^t$ converges to the solution of the Cauchy problem with initial data $\mu^0$ for the limiting evolution equation 
\be \label{limeq}
\partial_t \mu= \div ((\nab \g+\F)*\mu) \mu)\ee
in the dissipative case \eqref{mfgf} or 
\be\label{limecons}
\partial_t \mu= \div (\mathbb{J}\nab (\g*\mu)\mu)\ee
in the conservative case \eqref{cons}, with $*$ denoting the usual convolution.

These equations should be understood in a weak sense. Equation \eqref{limeq} (with $\F=0$) is sometimes called the {\it fractional porous medium equation}.   The two-dimensional Coulomb version also arises as a model for the dynamics of vortices in superconductors. The construction of solutions, their regularity and basic properties,  are addressed in \cite{linz,duz,mz,as,sv} for the Coulomb case of \eqref{limeq}, in \cite{csv,cv,xz} for the case $\d-2<\s<\d$ of \eqref{limeq}, and \cite{delort,yudo,schochet2}  for the two-dimensional Coulomb case of   \eqref{limecons}.
\smallskip 

Establishing the convergence of the empirical measures to solutions of the limiting equations  is nontrivial because of the nonlinear terms in the equation and the singularity of the interaction $\g$.
In fact, because of the strength of the  singularity,  treating the case of  Coulomb interactions in dimension $\d \ge 3$ (and even more so that of  super-coulombic interactions) had remained an  open question for a long time, see  for instance   the introduction of \cite{jabinwang2} and the review \cite{jabin}. It was not even completely clear that the result was true without expressing it in some statistical sense (with respect to the initial data).

In \cite{jabinwang,jabinwang2}, Jabin and Wang introduced a new approach for the related problem of the mean-field convergence of the solutions of Newton's second order system of ODEs to the Vlasov equation, which allowed them to treat all interactions kernels with bounded gradients, but still not Coulomb interaction.  The same problem has been addressed in  \cite{Boers-Pickl-16,lp,Lazarovici-16} with  results that still require a cutoff of the Coulomb interaction.  Our method already allows to unlock the case of Coulomb interaction for  monokinetic data, this is the topic of Appendix~\ref{app1}. The non-monokinetic case is even much more challenging and remains open.
 \medskip

The previously known results on the problems we are addressing were  the following:
\begin{itemize}
\item In dimension 2, choosing  \eqref{glog} and $\mathbb{J}$ the rotation by $\pi /2$ in  \eqref{cons}  corresponds to the so-called {\it point vortex system} which is well-known in fluid mechanics (cf. for instance  \cite{mp}), and its mean-field convergence to the Euler equation in vorticity form \eqref{limecons}  was already established \cite{schochet}.  His proof can be readapted to treat  \eqref{mfgf} as well in that case.  Results of similar nature were also obtained in \cite{ghl}.
\item Hauray \cite{hauray} (see also \cite{choicarrillohauray}) treated the case of all sub-Coulombic interactions ($\s<\d-2$) for  (a possibly higher-dimensional generalization of) \eqref{cons}, where particles  can have positive and negative charges and thus can attract as well as repel. The proof, which relies on  the stability in $\infty$-Wasserstein distance of the limiting solution, cannot be adapted to $\s\ge \d-2$.
\item In dimension 1 and in the dissipative case \eqref{mfgf}, Carrillo-Ferreira-Precioso and Berman-Onnheim \cite{cfp,bo} proved the unconditional convergence for all $0<\s<1$ using the framework of Wasserstein gradient flows but their method, based on the convexity of the interaction in dimension 1, does not extend to higher dimensions.
\item Duerinckx \cite{du}, inspired by the {\it modulated energy} method of \cite{serf} for Ginzburg-Landau equations (where vortices also interact like Coulomb particles in dimension 2), was able to prove the result in the dissipative case \eqref{mfgf} for $\d=1$ and $\d=2$ with $\s<1$, conditional to the regularity of the limiting solution, as we have here.  
\end{itemize}
\smallskip 

In this paper, we prove the mean field convergence for \eqref{mfgf} and \eqref{cons} in all the cases \eqref{formeg} and \eqref{glog} in every dimension. This extends Duerinckx's result, which involves overcoming serious difficulties, as described further below, and we add the possible additional interaction force $\F$ in the dissipative case. We are limited to $\s<\d$ and this is natural since for $\s\ge\d$ the interaction kernel $\g$ is no longer integrable and the limiting equation is expected to be a different one.

As in \cite{du}, our proof is a  modulated energy argument inspired from \cite{serf}, which is a way of exploiting a weak-strong uniqueness principle for the limiting equation. As mentioned above, looking for a stability principle in some Wasserstein distance fails for the Coulomb singularity. Instead we use a distance which is built as  a Coulomb (or Riesz) metric, associated to the norm 
\be \label{coulomb}\|\mu\|^2=\iint \g(x-y)d\mu(x)d\mu(y).\ee
We are able to  show by a Gronwall argument on this metric that the equations \eqref{limeq} and \eqref{limecons} satisfy a weak-strong uniqueness principle, and this can be translated into a proof of stability and convergence to $0$ of the norm of $\mu_N^t-\mu^t$  (if it is initially small, it remains small for all  further times).

The proof is self-contained and  quantitative. It does not require   understanding any qualitative property of the trajectories of the particles, such as for instance their minimal distances along the flow.

After this work was completed, Bresch, Jabin and Wang \cite{bjw} were able to beautifully incorporate our modulated energy into the relative entropy method of  \cite{jabinwang2}, turning it into a {\it modulated free energy}, which is a physically very natural quantity. It allowed them to extend the result of \cite{jabinwang2} to more singular interactions, including Coulomb. Seen from our point of view, it allows
 to  treat the cases with added noise \eqref{mfgfnoise} (but unfortunately not \eqref{consnoise}). This can be explained very succintly, we do it in Section \ref{introsupp} for the reader's convenience.
\subsection{Main result}\label{1.2}
Let $X_N$ denote $(x_1, \dots, x_N)$ and let us define for any  probability measure $\mu$,
\begin{equation}\label{energydef}
F_N(X_N, \mu)  = \iint_{\R^{\d}\times \R^{\d}\setminus \triangle} \g(x-y)d \Big( \sum_{i=1}^N \delta_{x_i}-N \mu\Big) (x)d \Big( \sum_{i=1}^N \delta_{x_i}-N\mu\Big) (y)
\end{equation}
where $\triangle$ denotes the diagonal in $\R^{\d} \times \R^{\d}$.  We choose for ``modulated energy"
$$F_N(\XN^t, \mu^t)$$
where $X_N^t=(x_1^t, \dots, x_N^t)$ are the solutions to \eqref{mfgf} or \eqref{cons}, 
 and $\mu^t$ solves the expected limiting PDE. 
The function  $F_N$ is not positive, however it is bounded below (by $-CN^{1+\frac\s\d}$ in the case \eqref{formeg}, respectively $ -\(\frac{N}{\d}\log N\)- C N$ in the case \eqref{glog}, 
 see Corollary \ref{corominob}). It  turns out that  also metrizes  at least weak convergence, as described in Proposition~\ref{procoer}. One may thus think of it as
 a good notion of distance from $\mu_N^t $ to $\mu^t$, more precisely $\frac{1}{N^2} F_N(X_N^t, \mu^t)$ is a good distance.

Our main result is a Gronwall inequality  on the time-derivative of $F_N(\XN^t, \mu^t)$, which implies     a quantitative rate of convergence of $\mu_N^t $ to $\mu^t$ in that metric.
 
Throughout the paper,  $(\cdot)_+$ denotes the positive part of a number.  The parameter $\s$ refers to the exponent in \eqref{formeg} while in the logarithmic case \eqref{glog} it is  set   to be $0$.  The space $\dot{H}^m(\R^\d)$ is the homogeneous Sobolev space of functions $u$ whose Fourier transform $\hat u$ satisfies $|\xi|^m \hat u(\xi)\in L^2(\R^\d)$.
 
\begin{theo}\label{th1}
Assume that $\g$ is of the form  \eqref{formeg}  or \eqref{glog}. 
Assume that 
$ \F\in \dot{H}^{\frac{\d-\s}{2}} (\R^\d)\cap C^{0,\alpha}(\R^\d)$ for some $\alpha>0$ and $\nab \F \in L^q(\R^\d)$ for some $1\le q\le \infty$.
Assume \eqref{limeq}, resp. \eqref{limecons}, admits a solution  $\mu^t$ such that, for some $T>0$, 
\be\label{nmut}
\begin{cases} \mu^t \in L^\infty([0,T], L^\infty (\R^{\d})),  \ \text {and } \nab^2 \g* \mu^t \in L^\infty  ([0,T], L^\infty (\R^{\d})) \quad \text{if } \ \s<\d-1
\\\mu^t \in L^\infty([0,T], C^\sigma (\R^{\d}))  \ \text {with }  \sigma > \s-\d+1,   \  \text{and } \nab^2 \g* \mu^t \in L^\infty  ([0,T], L^\infty (\R^{\d})) \  \text{if } \ \s\ge \d-1.
\end{cases}
\ee
 Let $X_N^t $ solve \eqref{mfgf}, respectively \eqref{cons}. Then  there exist positive constants  $C_1, C_2$ depending only on the norms of $\mu^t$ controlled by \eqref{nmut}  and those of $\F$, and an exponent $\beta<2$ depending only on $\d,\s,\alpha,\sigma$, such that 
for every $t\in [0, T]$ we have 
\be\label{distcoul}
F_N(X_N^t , \mu^t)
 \le \( F_N(X_N^0, \mu^0)+ C_1 N^\beta\) e^{C_2 t} .\ee
In particular, using the notation \eqref{munt},  if $\mu_N^0 \rightharpoonup \mu^0$ and is such that 
$$\lim_{N\to \infty}\frac{1}{N^2} F_N(X_N^0, \mu^0) =0,$$
then the same is true for every $t \in [0, T]$ and 
\be \mu_N^t \rightharpoonup \mu^t\ee in the weak sense.
\end{theo}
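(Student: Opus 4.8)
The plan is to differentiate the modulated energy $F_N(X_N^t,\mu^t)$ in time and show it satisfies a Gronwall inequality of the form $\frac{d}{dt}F_N(X_N^t,\mu^t)\le C_2\, F_N(X_N^t,\mu^t)+C_1 N^\beta$, after which \eqref{distcoul} follows by integrating the differential inequality, and the weak convergence statement follows from the coercivity of $F_N$ asserted in Proposition~\ref{procoer} (dividing by $N^2$ and using that $F_N/N^2\to 0$ controls a negative-order Sobolev norm of $\mu_N^t-\mu^t$, hence weak convergence). So the entire content is in the time-derivative computation.

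To carry this out I would first write $\dot x_i$ from \eqref{mfgf} (resp. \eqref{cons}) and $\partial_t\mu^t$ from \eqref{limeq} (resp. \eqref{limecons}), then differentiate $F_N$ using the chain rule: one set of terms comes from the motion of the points $x_i^t$, another from the evolution of $\mu^t$. After rearranging and using the weak formulation of the limiting PDE (testing against $\g*(\sum\delta_{x_i}-N\mu)$, which is exactly the potential generated by the modulated charge distribution), the bulk of the derivative should reorganize into an expression of the form $\int \nabla h_N\cdot(\nabla h_N)\,(\text{something involving }\mu^t)$ — essentially a ``commutator'' or stress-energy term $\iint (\nabla v(x)-\nabla v(y))\cdot\nabla\g(x-y)\,d(\text{modulated measure})(x)\,d(\text{modulated measure})(y)$ where $v=\g*\mu^t$ is the smooth limiting velocity field. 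The key algebraic identity is that when the velocity field $v$ is replaced by a transported-by-$v$ version, the leading interaction terms cancel and one is left with something controlled by $\|\nabla^2 v\|_{L^\infty}=\|\nabla^2\g*\mu^t\|_{L^\infty}$ times $F_N$ itself, which is where the first regularity hypothesis in \eqref{nmut} enters.

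The hard part — and the reason the Coulomb/super-Coulombic case in general dimension was open — is controlling the short-scale (near-diagonal) part of this stress-energy functional, i.e. the self-interaction of each point with the smooth background near $x=y$. This requires a \emph{renormalized} commutator estimate: one must bound $\iint_{\R^\d\times\R^\d\setminus\triangle}(\nabla\xi(x)-\nabla\xi(y))\cdot\nabla\g(x-y)\,d\mathrm{fluct}(x)\,d\mathrm{fluct}(y)$ where $\mathrm{fluct}=\sum\delta_{x_i}-N\mu^t$, by $C\|\nabla\xi\|_{\text{appropriate}}\big(F_N+ \text{(error)}\big)$ where the error is a genuinely $N$-dependent positive power $N^\beta$ with $\beta<2$. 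This is exactly the type of estimate from Serfaty's work on Coulomb/Riesz gases, and it presumably relies on smearing each Dirac mass at a suitable truncation scale $\eta$ (or $\eta_i$, point-dependent), the electric-field/stress-tensor representation of the Coulomb energy $\frac{1}{\c}\int|\nabla h_N^\eta|^2$, monotonicity of the truncated energy in $\eta$, and optimizing the truncation scale against the additive error it produces. The regularity assumptions in \eqref{nmut} — with the Hölder refinement $C^\sigma$, $\sigma>\s-\d+1$, in the super-Coulombic range $\s\ge\d-1$ — are precisely what is needed to make this commutator estimate close with $\beta<2$, since the near-diagonal contribution of $\mu^t$ to the commutator is governed by the local modulus of continuity of $\mu^t$ when the kernel is too singular. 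Once this renormalized functional inequality is established, assembling it with the chain-rule computation and the coercivity Proposition~\ref{procoer} gives the Gronwall inequality and hence the theorem.

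I expect the main obstacle to be exactly that renormalized commutator / stress-energy estimate and the careful bookkeeping of the truncation procedure (choosing $\eta$, handling the discrepancy between the truncated and untruncated energies, and tracking how the regularity of $\mu^t$ feeds into the exponent $\beta$); by contrast the chain-rule computation and the final integration of the Gronwall inequality are routine, and the passage from $F_N/N^2\to 0$ to weak convergence is immediate from Proposition~\ref{procoer}.
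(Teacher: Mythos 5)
Your proposal is correct and follows essentially the same route as the paper: differentiate $F_N(X_N^t,\mu^t)$ (the paper's Lemma \ref{lem31}), recognize the resulting term as the commutator $\iint(\nab h^{\mu^t}(x)-\nab h^{\mu^t}(y))\cdot\nab\g(x-y)$ against the modulated measure, bound it by the renormalized stress-energy/truncation estimate of Proposition \ref{32} (smearing at the point-dependent scales $\rr_i$, extension representation, and the almost-monotonicity of the truncated energy), and conclude by Gronwall and the coercivity Proposition \ref{procoer}. The one hard ingredient you defer, the renormalized commutator estimate with error $O(N^\beta)$, $\beta<2$, and the role of the $C^\sigma$ assumption for $\s\ge\d-1$, is exactly the content of Proposition \ref{32} and Lemma \ref{lem36} in the paper, so your plan matches the paper's proof in both structure and key ideas.
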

Establishing the convergence of the empirical measures 
is essentially equivalent to proving {\it propagation of molecular chaos} (see \cite{golse,hm,jabin} and references therein) which means showing that if $f_N^0(x_1, \dots, x_N)$ is the initial probability density of   seeing particles at $(x_1, \dots, x_N)$ and if $f_N^0$ converges to some factorized state  $\mu^0 \otimes \dots \otimes \mu^0$, then  the $k$-point marginals $f_{N,k}^t$ converge for all time to $(\mu^t )^{\otimes k}$. With Remark \ref{remchaos}, our result implies a convergence of this type as well.

Let us point out that using a Fourier-based point of view on \eqref{energydef} Bresch-Jabin-Wang were able (see \cite{bjw,bjw2}) to subsequently relax  the assumptions on the interaction $\g$: it does not need to be Coulomb or Riesz (a bit like with the added regular force in \eqref{mfgf}) but may contain a mildly singular attractive part, as long as a sufficiently strong repulsive part is  still present.

\subsection{Comments on the assumptions}\label{1.3}Let us now comment on the regularity assumption made in \eqref{nmut}.
First of all, one can check (see Lemma \ref{lemregu}) that the assumption \eqref{nmut} is implied by 
\be \label{assumfaible}\mu \in L^\infty([0,T], C^\theta (\R^{\d}) ) \quad \text{for some} \ \theta>\s-\d+2,
\ee which coincides with the assumption made in \cite{du}
and is a bit stronger.
This weakening of the assumption allows to include for instance the case of measures which are (a regular function times) the characteristic function of some regular set, such as in the situation of vortex patches for the Euler equation in vorticity form,  corresponding to \eqref{cons} in the two-dimensional logarithmic case.  These vortex patches were first studied in \cite{chemin,bc,serfati} where it was shown that if the patch initially has a $C^{1,\alpha}$ boundary  this remains the case over time, and our second assumption that  the velocity  $\nab \g* \mu^t$ be Lipschitz holds as well (see also \cite{bk}). It is not too difficult to check that in all dimensions this second condition holds any time $\mu$ is $C^\sigma$ with $\sigma>0$ away from a finite number of $C^{1,\alpha}$ hypersurfaces.
More generally, such situations with patches can be expected to naturally arise in all the Coulomb cases. For instance, in the dissipative Coulomb case \eqref{mfgf} with $\F=0$, in any dimension
a  self-similar solution in the form of (a constant multiple of) the characteristic function of an expanding ball was exhibited in \cite{sv} and  shown to be an attractor of the dynamics. 
For the non-Coulomb dissipative cases, the corresponding self-similar solutions, called {\it Barenblatt solutions}, are of the form 
$$t^{-\frac{\d}{2+\s}}(a-b x^2 t^{-\frac{2}{2+\s}})_+^{\frac{\s- \d+ 2}{2}}$$ 
as shown in \cite{bik,cv} (and this formula retrieves the solution of \cite{sv} for $\s=\d-2$).

 If the initial $\mu^0$ is sufficiently regular, the stronger assumption \eqref{assumfaible} is known to hold  with $T=\infty$ for the Coulomb case (see \cite{linz} where the proof works as well in higher dimensions), and it is known up to some $T>0$ in the case $(\d-2)_+<\s\le \d-1$ \cite{xz}. 
As for \eqref{limecons}, to our knowledge the desired regularity is only known in dimension 2 for the Euler equation in vorticity form  (see \cite{volib,chemin}), although the arguments of \cite{xz} written for the dissipative case seem to also apply to the conservative one. Our convergence result thus holds in these cases, under the  assumption  that the limit $\mu^0$ of $\mu_N^0 $ is sufficiently regular and that $F_N(\XN^0, \mu^0) =o(N^2)$. Note that, as shown in \cite{du}, the latter  is implied by the convergence of the initial energy 
$$\lim_{N\to \infty} \frac{1}{N^2 } \sum_{i\neq j} \g(x_i^0-x_j^0) = \iint_{\R^{\d}\times \R^{\d}} \g(x-y) d\mu^0(x)d \mu^0(y)$$ which can be viewed as a well-preparedness condition.
For any regular enough $\mu^0$, one may for instance build  initial conditions satisfying this assumption (and something even stronger) by the construction in \cite{ps}.

For $\d-1<\s<\d$, even the local in time propagation of  regularity of solutions of \eqref{limeq} remains an open problem. Note that the uniqueness of regular enough solutions  is always implied by the weak-strong stability argument we use,  reproduced in Section \ref{method} below.

Requiring some  regularity of the solutions to the limiting equation for establishing convergence  with relative entropy / modulated entropy / modulated energy methods is fairly  common: the same situation appears for instance in \cite{jabinwang,jabinwang2} or in the derivation of the Euler equations from the Boltzmann equation via the modulated entropy method, see  \cite{lsr} and references therein.

\subsection{The method}\label{method}
As mentioned, our method exploits a 
weak-strong uniqueness principle for the solutions of \eqref{limeq}, resp. \eqref{limecons}, which is exactly the same as \cite[Lemma 2.1, Lemma 2.2]{du} (and can be easily readapted to the conservative case) and states that if $\mu_1^t $ and $\mu_2^t$ are two $L^\infty$ solutions to \eqref{limeq} such that $\nab^2 (\g* \mu_2)\in L^1([0,T], L^\infty)$, we have 
\begin{multline}\label{wsu}\iint_{\R^{\d}\times \R^{\d}} \g(x-y) d(\mu_1^t-\mu_2^t)(x) d(\mu_1^t-\mu_2^t)(y)\\ \le e^{C\int_0^t \|\nab^2 (\g* \mu_2^s)\|ds}\iint_{\R^{\d}\times \R^{\d}} 
\g(x-y) d(\mu_1^0-\mu_2^0)(x) d(\mu_1^0-\mu_2^0)(y).\end{multline}
But the Coulomb or Riesz energy \eqref{coulomb}
is nothing else than the fractional Sobolev $H^{-\a}$ norm of $\mu$ with $\a= \frac{\d-\s}{2}$, hence this  is a good metric of convergence and implies the weak-strong uniqueness property.

A crucial ingredient is  the use of the stress-energy (or energy-momentum) tensor which naturally appears  when taking the {\it  inner variations} of the energy \eqref{coulomb}, i.e. computing $\frac{d}{dt}|_{t=0}  \|\mu \circ (I+ t \psi)\|^2$  (this is standard in the calculus of variations, see for instance \cite[Sec. 1.3.2]{helein}).
To explain further, let us restrict for now to the Coulomb case, and set $h^\mu= \g* \mu$, the Coulomb potential generated by $\mu$.  In that case, we have 
\be \label{eqdelta}-\Delta h^\mu= \cd \mu\ee
for some constant $\cd$ depending only on $\d$.
The first key is to reexpress the Coulomb energy \eqref{coulomb}  as a single integral in $h^\mu$, more precisely we easily find via an integration by parts that if $\mu$ is a measure with $\int d\mu=0$, 
\be\label{intbp}
\iint_{\R^{\d}\times \R^{\d}} \g(x-y)d\mu(x)d \mu(y)= \int_{\R^{\d}} h^\mu d\mu= -\frac{1}{\cd}\int_{\R^{\d}}h^\mu \Delta h^\mu= \frac{1}{\cd}\int_{\R^{\d}} |\nab h^\mu|^2.\ee The stress-energy tensor
is then defined as the $\d\times \d$ tensor with coefficients
\be [h^\mu, h^\mu]_{ij}= 2 \partial_i h^\mu \partial_j h^\mu - |\nab h^\mu|^2 \delta_{ij},\ee
with $\delta_{ij} $ the Kronecker symbol.
We may compute that if $\mu$ is regular enough
\be \label{divstres}\div [h^\mu, h^\mu]= 2 \Delta h^\mu\nab h^\mu=   - 2 \cd \mu \nab h^\mu.\ee (Here the divergence is a vector with entries equal to the divergence of each row of $[h^\mu, h^\mu]$.)
We thus see how this stress-energy tensor allows to give a weak meaning  to the product $\mu \nab h^\mu= \mu \nab \g * \mu$, with $[h^\mu, h^\mu]$ well-defined in energy space and pointwise controlled by $|\nab h^\mu|^2$, which can by the way serve to give a notion of weak solutions of the equation in the energy space (as in \cite{delort,linz}).
Note that in dimension $2$, it is known since \cite{delort} that even though $[h^\mu, h^\mu]$ is nonlinear, it is stable under weak limits in energy space provided $\mu$ has a sign, but this fact {\it does not extend} to higher dimension.
\smallskip

Let us now present the short proof of \eqref{wsu} as it will be a model for the main proof. We focus on the dissipative case (the conservative one is an easy adaptation) and still the Coulomb case for simplicity with no additional interaction $\F$ (when present, the additional terms can be absorbed thanks to the dissipation).
Let $\mu_1$ and $\mu_2$ be two solutions to \eqref{limeq} and $h_i=\g* \mu_i$ the associated potentials, which solve
 \eqref{eqdelta}. Let us compute 
\begin{multline}\label{calc}
\partial_t \int_{\R^{\d}} |\nab (h_1-h_2)|^2= 
 2 \cd \int_{\R^{\d}} (h_1-h_2)\partial_t (\mu_1-\mu_2) \\
=  2 \cd\int_{\R^{\d}} (h_1-h_2) \div(\mu_1 \nab h_1 - \mu_2 \nab h_2)\\
= -2 \cd \int_{\R^{\d}} (\nab h_1- \nab h_2)\cdot (\mu_1 \nab h_1 -\mu_2 \nab h_2)\\
= - 2 \cd \int_{\R^{\d}}|\nab (h_1-h_2)|^2 \mu_1 - 2 \cd \int_{\R^{\d}} \nab h_2\cdot \nab (h_1-h_2) (\mu_1-\mu_2)\end{multline}
In the right-hand side, we recognize from \eqref{divstres} the divergence of the stress-energy tensor $[h_1-h_2, h_1-h_2]$, hence 
\begin{equation*}
\partial_t \int_{\R^{\d}} |\nab (h_1-h_2)|^2\le \int_{\R^{\d}} \nab h_2 \cdot \div [h_1-h_2, h_1-h_2]\end{equation*}
so if $\nab^2 h_2$ is bounded, we may integrate by parts the right-hand side and bound it pointwise by
$$\|\nab ^2 h_2\|_{L^\infty} \int_{\R^{\d}} \left| [ h_1-h_2, h_1-h_2]\right| \le 2 \|\nab ^2 h_2\|_{L^\infty} \int_{\R^{\d}} |\nab (h_1-h_2)|^2, $$ and the claimed result follows by Gronwall's lemma.

In the Riesz case, the Riesz potential $h^\mu=\g*\mu$ is no longer the solution to a local equation, and to find a replacement to \eqref{eqdelta}--\eqref{divstres}, we use an extension procedure as popularized by \cite{caffsilvestre} in order to obtain a local integral in $h^\mu$ in the extended space $\R^{\d+1}$.

In the discrete case of the original ODE system, all the above integrals are singular and this constitutes the main difficulty  overcome in this paper.
 In place of the second term in the right-hand side of \eqref{calc},  we then  have to control a  term which by symmetry can be written in the form 
\be\label{pt}\iint_{\R^{\d}\times \R^{\d} \backslash \triangle}( \nab h^{\mu_1}(x)-\nab h^{\mu_2}(y)) \cdot \nab \g(x-y) d(\mu_1-\mu_2)(x)d(\mu_1-\mu_2)(y)\ee
where $\triangle $ denotes the diagonal, $\mu_1$ is the limiting measure $\mu^t$ and $\mu_2$ is the discrete empirical measure $\mu_N^t$.
Such terms are well known (see for instance \cite{schochet}), and create
the main difficulty due to the singularity of $\g$. When removing the diagonal,  the positivity manifested by \eqref{intbp}  is in effect lost. 
We are however able to prove the following crucial functional inequality.
\begin{prop}\label{32} Assume that $\mu$ is a probability density, with  
$\mu \in C^{\sigma}(\R^{\d})$ with $\sigma>\s-\d+1$  if $\s \ge \d-1$; respectively 
$\mu \in L^\infty(\R^{\d}) $ or $\mu \in C^\sigma(\R^{\d})$ with $\sigma>0$  if $\s <\d-1$.
For any $X_N \in (\R^\d)^N$ and any Lipschitz map
$\psi: \R^{\d}\to \R^{\d}$, we have 
\begin{multline}\label{crucial}
\left|\iint_{\triangle^c} \( \psi(x)- \psi(y)\)\cdot  \nab \g(x-y) d (\sum_{i=1}^N \delta_{x_i}  - N\mu)(x) d(\sum_{i=1}^N \delta_{x_i} - N \mu)(y)\right| 
\\ \le  
C\|\nab\psi\|_{L^\infty}\( F_N(\XN, \mu)+  (1+\|\mu\|_{L^\infty} )  N^{2-\frac{\d-\s}{\d(\d+1)}} +2 \(\frac{N}{\d}\log N \)\indic_{\eqref{glog}}\)  \\
+C \min \(   \|\psi\|_{L^\infty} \|\mu\|_{L^\infty}  N^{1+\frac{\s+1}{\d}} +  \|\nab \psi\|_{L^\infty}\|\mu\|_{L^\infty}N^{1+\frac{\s}{\d}},
  \|\psi\|_{W^{1,\infty} } \|\mu\|_{C^{\sigma}} N^{1+\frac{\s +1-\sigma}{\d} }\)
\\ + C\begin{cases}
   \|\nab \psi\|_{L^\infty}(1+ \|\mu\|_{C^{\sigma}}) N^{2-\frac{1}{\d}} \qquad \text{if} \  \s \ge \d-1\\
  \|\nab \psi\|_{L^\infty}(1+ \|\mu\|_{L^\infty}) N^{2-\frac{1}{\d}}
 \qquad \text{if} \  \s < \d-1,
  \end{cases}
\end{multline} where $C$ depends only on $\s$, $\d$.
 \end{prop}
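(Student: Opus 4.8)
The plan is to reduce the bilinear expression on the left-hand side of \eqref{crucial} to a single integral involving a renormalized stress-energy tensor and then bound that integral pointwise by the modulated energy density plus explicit error terms. First I would introduce a truncation parameter and replace the singular potential $\g$ by a truncated version: for each point $x_i$ one sets a length scale $\eta_i$ (roughly the distance to the nearest other point, capped at $N^{-1/\d}$), replaces $\g(x-x_i)$ near $x_i$ by its value on $\partial B(x_i,\eta_i)$, and correspondingly replaces each Dirac mass by a smeared-out charge $\delta_{x_i}^{(\eta_i)}$ spread on the sphere of radius $\eta_i$. Writing $h_N=\g*(\sum_i \delta_{x_i}-N\mu)$ and its truncated counterpart $h_{N,\vec\eta}$, one gets a local (in the Riesz case, after the Caffarelli--Silvestre extension to $\R^{\d+1}$ with the appropriate degenerate weight $|z|^\gamma$) field for which the divergence identity analogous to \eqref{divstres} holds, so that
\[
\iint_{\triangle^c}(\psi(x)-\psi(y))\cdot\nab\g(x-y)\,d(\textstyle\sum_i\delta_{x_i}-N\mu)(x)\,d(\textstyle\sum_i\delta_{x_i}-N\mu)(y)
= \int \nab\psi : [h_{N,\vec\eta},h_{N,\vec\eta}] + \text{(errors)},
\]
where $[\cdot,\cdot]$ is the (extended) stress-energy tensor and the errors come from (i) the difference between the truncated and untruncated interactions, (ii) the self-interaction terms $\g(\eta_i)$ that appear on the diagonal, and (iii) the discrepancy between $\mu\nab h^\mu$ evaluated via the tensor and via the principal value. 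This step is essentially the renormalization machinery of \cite{rs,ps,ls1,ls2}, adapted here to a varying test vector field $\psi$ rather than a global dilation.

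The second step is the pointwise bound $|[h_{N,\vec\eta},h_{N,\vec\eta}]|\le C|\nab h_{N,\vec\eta}|^2$ (in the extended variables, with the weight), together with the crucial comparison identifying $\int |\nab h_{N,\vec\eta}|^2$ with $F_N(\XN,\mu)$ up to controlled corrections: precisely, the truncated energy equals the modulated energy plus $\sum_i \g(\eta_i)$ (the renormalized self-energy) plus cross terms that are handled because $\eta_i$ is comparable to $N^{-1/\d}$, so $\sum_i\g(\eta_i)\lesssim N^{1+\s/\d}$ (and $\lesssim \frac{N}{\d}\log N$ in the logarithmic case), which is exactly the $N^{1+\s/\d}$ and $\frac N\d\log N\,\indic_{\eqref{glog}}$ terms in \eqref{crucial}. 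Here one must be careful that the smearing of the charges does not cost more than the stated powers of $N$; this uses the regularity of $\mu$ — $L^\infty$ when $\s<\d-1$, and $C^\sigma$ with $\sigma>\s-\d+1$ when $\s\ge\d-1$ — to bound the interaction of a smeared charge against $N\mu$ on the ball $B(x_i,\eta_i)$, producing the terms with $\|\mu\|_{L^\infty}$, $\|\mu\|_{C^\sigma}$ and the powers $N^{1+(\s+1)/\d}$, $N^{1+(\s+1-\sigma)/\d}$, and $N^{2-(\s+1)/2}$. The last case-split term $N^{2-1/\d}$ comes from bounding, on the union of the small balls $\cup_i B(x_i,\eta_i)$ (of total measure $\lesssim N\cdot N^{-1}=O(1)$, but here one needs the $L^\infty$ or $C^\sigma$ bound on $\mu$ to control $\int_{\cup_i B(x_i,\eta_i)}|\nab h^\mu|^2 |\nab\psi|$ etc.), the portion of the integral that the stress-energy identity does not capture.

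The third step is to assemble: choosing the $\eta_i$ optimally (the standard choice minimizing the sum of truncation error and self-energy, cf. \cite{ls1}), all error terms are of the form $\|\nab\psi\|_{L^\infty}$ or $\|\psi\|_{L^\infty}$ times an explicit power of $N$ strictly less than $N^2$, and the main term is $\|\nab\psi\|_{L^\infty}\int|\nab h_{N,\vec\eta}|^2 \le C\|\nab\psi\|_{L^\infty}(F_N(\XN,\mu)+\text{self-energy})$, giving \eqref{crucial}. I expect the main obstacle to be the second step in the super-coulombic regime $\s\ge\d-1$: there $\nab\g$ is not locally integrable, the field $\nab h^\mu$ need not be bounded, and one must exploit the Hölder regularity $C^\sigma$ of $\mu$ with $\sigma>\s-\d+1$ to make sense of and control the smeared self-interactions and the near-diagonal contributions — this is precisely where the method of \cite{du} broke down (its ball-construction forced $\s<1$, $\d\le 2$), and replacing that by the direct truncation-and-extension estimate, with the sharp accounting of powers of $N$, is the heart of the proof.
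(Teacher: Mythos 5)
Your proposal follows essentially the same route as the paper's proof: smear each Dirac mass at a truncation scale given by the minimal distance $\rr_i$ capped at $N^{-1/\d}$, pass to the Caffarelli--Silvestre extension with weight $\yg$, rewrite the bilinear form as $\int \nab\hat\psi:[H_{N,\vecr}^\mu,H_{N,\vecr}^\mu]$ plus diagonal and cross-term errors, bound the tensor pointwise by $2\yg|\nab H_{N,\vecr}^\mu|^2$, and compare the truncated energy with $F_N(\XN,\mu)$ via the almost-monotonicity estimate, which is exactly the paper's Steps 1--4 together with Proposition \ref{prop:monoto} and its corollary. The one imprecision is your claim that $\eta_i$ is comparable to $N^{-1/\d}$ so that $\sum_i\g(\eta_i)\lesssim N^{1+\s/\d}$: the $\rr_i$ can be much smaller than $N^{-1/\d}$, so the self-energies are only controlled through the bound $\sum_i\g(\rr_i)\le C(F_N(\XN,\mu)+(1+\|\mu\|_{L^\infty})N^{1+\s/\d}+\cdots)$ obtained from the monotonicity estimate at the uniform scale $N^{-1/\d}$, which is still consistent with your final assembly since $F_N$ appears on the right-hand side of \eqref{crucial}.
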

 The right-hand side should be read as $C_{\psi, \mu, s, \d}( F_N(X_N, \mu)+ N^\beta)$ for some $\beta <2$. 
This inequality is the main novelty of the paper.  Even though the term $(\psi(x)-\psi(y))
\cdot \nab \g(x-y)$ has a singularity of same order as $\g(x-y)$ near the diagonal, the inequality is not at all obvious due to the lack of positivity of the integrand and its proof is rendered difficult by the handling of the removed diagonal terms.
Note that in \cite{gp} Golse and Paul were able to treat the mean field limit for the {\it quantum} Coulomb dynamics, relying on this inequality.

To give more insight into the proof of this proposition, we need to discuss the electric representation of the modulated energy $F_N$, again restricting to the Coulomb case. For that we introduce the electric potential
$$H_N^{\mu}[X_N]=\g* \( \sum_{i=1}^N \delta_{x_i}-N \mu\).$$
Arguing as in \eqref{eqdelta}--\eqref{intbp} we would like to rewrite 
$F_N(X_N)$ as $\int_{\R^\d} |\nab H^\mu_N[X_N]|^2$. This is not quite correct due to the divergence of $H_N^\mu$ at the points $x_i$. This can however be corrected by using mollified Dirac masses and setting for any $\vec{\eta}=(\eta_1, \dots , \eta_N)\in \R^N$, 
$$H_{N, \vec{\eta}}^{\mu}=\g* \( \sum_{i=1}^N \delta_{x_i}^{(\eta_i)}-N \mu\)$$
where we dropped the $[X_N]$ in the notation and let $\delta_x^{(\eta)}$ denote the uniform measure of mass $1$ on $\partial B(x, \eta)$. This effectively truncates $H_N^{\mu}$ at scale $\eta_i$ around each $x_i$, i.e. a scale which depends on each point.
Reinserting the diagonal terms, it is then not too difficult to check  that 
\begin{align*}F_N(X_N, \mu)  = &  \lim_{\eta_i\to 0} 
\iint \g(x-y) d\( \sum_{i=1}^N \delta_{x_i}^{(\eta_i)}-N \mu\)(x)d\( \sum_{i=1}^N \delta_{x_i}^{(\eta_i)}-N \mu\)(y)
\\ & - \sum_{i=1}^N \iint \g(x-y) d \delta_{x_i}^{(\eta_i)}(x)d\delta_{x_i}^{(\eta_i)}(y)
\\
= &\frac{1}{\cd}\lim_{\eta_i\to 0} \(
\int_{\R^\d} |\nab H_{N, \veta}^\mu|^2
-\cd \sum_{i=1}^N   \g(\eta_i)\).\end{align*} This effectively gives a renormalized meaning to the identity \eqref{intbp} in this setting.

The idea of expressing the interaction energy as a local integral in $H_N^\mu$ and its renormalization procedure were previously used in the study of Coulomb and Riesz energies in  \cite{rs,ps,ls1,ls2}, but it was not clear how to adapt these ideas to control \eqref{pt}: in fact in \cite{du} this was dealt with by a ball-construction procedure inspired from the analysis of Ginzburg-Landau vortices, which led to nonoptimal estimates and to the restriction to the dissipative case only and to  $\s<1$ and $\d\le 2$.

In fact, we can say better, and this is where we depart significantly from previous works,  by exploiting the fact that the expression  $\int_{\R^\d} |\nab H_{N, \veta}^\mu|^2
-\cd \sum_{i=1}^N   \g(\eta_i)$ is essentially {\it  decreasing} with respect to $\eta_i$ and constant for $\eta_i$ small enough that the $B(x_i, \eta_i)$'s are disjoint, see
Proposition~\ref{prop:monoto}. More precisely, we may set $\rr_i$ to be $1/4$ of the minimal distance from $x_i$ to all other points, and for $\eta_i\le \rr_i$, we have the {\it equality} (without limits)
\be \label{equality} F_N(X_N, \mu) = \frac{1}{\cd} \(
\int_{\R^\d} |\nab H_{N, \veta}^\mu|^2
-\cd \sum_{i=1}^N   \g(\eta_i)\)+ \text{explicit negligible terms}.\ee
In addition, an observation made in this paper for the first time is that when choosing precisely $\eta_i=\rr_i$,
the potentially large  terms $\int_{\R^\d} |\nab H_{N, \vec{\rr}}^\mu|^2$ and $\cd \sum_{i=1}^N   \g(\rr_i)$ are {\it separately} controlled by $C F_N$ (plus good terms) and conversely, see Corollary \ref{34}. 
It now suffices to control the left-hand side of \eqref{crucial} by $\int_{\R^\d} |\nab H_{N, \vec{\rr}}^\mu|^2$.
Let us emphasize that this choice of truncation  $\rr_i$ that depends on the point is (up to constants) the only one which is at the same time large enough so that  $\int_{\R^\d} |\nab H_{N, \veta}^\mu|^2$ is directly controlled by $F_N$ and small enough that the equality \eqref{equality} holds. In other words, since we do not have any bound from below on the distance between the points, a point-dependent truncation radius  is crucial. As a side note, the idea of using truncations for proving mean-field limits is common, however what is usually done is to truncate the interaction $\g$ itself (at lengthscales possibly depending on $N$), and try to take limits in the resulting flow. What we do here is very different: we do not modify the interaction but desingularize the charges themselves
as an intermediate step to control the singular terms.

In order to bound the left-hand side of \eqref{crucial},  
the key is then to interpret it as well as a single integral in  terms of $\nab H_N^\mu$
 in a suitable ``renormalized" way, more precisely in terms of the stress tensor associated to $\nab H_{N,\vec{\eta}}^\mu$, for $\eta_i\to 0$.  The quantity then obtained is this time not monotone in $\eta_i$, however, by carefully studying its variations in $\eta_i$ (this is the hardest part of the proof), we are able to control it by the (integral of the) stress tensor associated to $\nab H_{N, \vec{\rr}}$, which can in turn be bounded by $\int |\nab H_{N, \vecr}^\mu|^2$, and we conclude thanks to the fact 
that this is controlled by $CF_N$.


\subsection{The modulated free energy for the case with noise}\label{introsupp}
In this subsection, we explain the modulated free energy method of \cite{bjw}, which is posterior to the first version of this work and allows to treat the case of \eqref{mfgfnoise}.
In this case, the mean-field limit inherits an added Laplacian:
\be\label{mfn1}\partial_t \mu= \div ((\nab \g+\F)* \mu)\mu)+\theta \Delta \mu.\ee

 Consider $f_N(x_1, \dots, x_N)$ 
 a symmetric probability density on configurations in $(\R^\d)^N$, and let us again abbreviate $(x_1, \dots, x_N)$ by $X_N$. 
Let us introduce the relative entropy \footnote{Note that we take $N^2$ times the usual relative entropy, because all our quantities are $N^2$  times standard ones.} 
$$H_N(f_N |\mu^{\otimes N}) := N \int_{\R^{\d N}} f_N\log \frac{f_N}{\mu^{\otimes N}} dX_N$$
It is of course a way of measuring how 
close the distribution $f_N$ is to $\mu^{\otimes N}$.
Then consider $$K_N(f_N,\mu) = \int_{\R^{\d N}} f_N(X_N) F_N(X_N, \mu) dX_N$$
the expectation of our modulated energy $F_N$ with respect to $f_N$.
Bresch-Jabin-Wang introduce the modulated free energy 
\be \label{mfenergy}
\mathcal F_\theta(f_N, \mu) = \theta H_N(f_N |\mu^{\otimes N}) + K_N(f_N, \mu).\ee
It has exactly the structure of a free energy in statistical physics, i.e.~of the form energy plus temperature times entropy, and when temperature vanishes and $f_N$ concentrates on one configuration, it coincides with the regular modulated energy.

Consider now $f_N^t$ corresponding to the probability density of particles following the flow \eqref{mfgfnoise}, then by Ito calculus $f_N^t$ solves a Liouville or Kolmogorov equation 
\be\label{liou}
\partial_t f_N^t = \sum_{i=1}^N \div_i \(     \frac{1}{N} \sum_{i\neq j} \nab_i\mathcal H_N(x_i-x_j)     f_N^t (X_N) \)+ \theta 
\sum_{i=1}^N \Delta_{i} f_N^t .\ee 
Their crucial observation is that when combining the relative entropy and the modulated energy in exactly the way  of \eqref{mfenergy} and differentiating in time $\mathcal F_\theta(f_N^t, \mu^t)$ for $\mu^t $ a solution to the mean-field limit  \eqref{mfn1}  and $f_N^t$ a solution to the Liouville equation \eqref{liou}, the new and problematic terms  arising in $\partial_t H_N(f_N^t, \mu^t)$  from the presence of the noise (which were an obstacle to treat the Coulomb case in \cite{jabinwang2}) exactly cancel with the new terms arising in $\partial_t K_N(f_N^t, \mu^t)$ (this does not happen in the conservative case of \eqref{consnoise} though). This  allows them to obtain the following crucial identity:
\begin{multline}
\frac{d}{dt} \mathcal F_\theta (f_N^t, \mu^t) \\ \le -  \int \(\iint_{\triangle^c} 
\nab \g(x-y)\cdot  (\psi^t(x)-\psi^t(y)) d( \sum_{i=1}^N  \delta_{x_i}  - N\mu^t)(x) d(\sum_{i=1}^N \delta_{x_i} - N \mu^t)(y)\) df_N^t(X_N)
\end{multline}
with this time 
$$\psi^t=\nab h^{\mu^t}+ \theta \frac{\nab \mu^t}{\mu^t}.$$
Once this identity is observed, Proposition \ref{32} directly applies (if $\mu^t$ is assumed regular enough) and  yields
$$ \frac{d}{dt}\mathcal F_\theta(f_N^t, \mu^t) \le CK_N(f_N^t, \mu^t) +o(N^2) \le C \mathcal F_\theta (f_N^t, \mu^t)+ o(N^2)
 $$
which allows to directly conclude via Gronwall's lemma  the mean-field convergence in the case with noise.

\smallskip

The rest of the paper is organized as follows: we start by deriving the main result assuming the result of Proposition \ref{32}. In Section  \label{sec:def} we present the details of the electric formulation in the general Riesz case and prove the main properties of the modulated energy (monotonicity, bound from below, coerciveness).  We conclude in Section \ref{secp32} with the discussion of the stress-energy tensor and  the proof of Proposition \ref{32}. 
The paper finishes with Appendix \ref{app1} on the derivation of the Vlasov-Poisson system in the monokinetic case.
\medskip


{\bf Acknowledgments:} I would like to thank Mitia Duerinckx for his careful reading and helpful suggestions and Pierre-Emmanuel Jabin for useful discussions on the work \cite{bjw,bjw2}.
This research was supported by the NSF grant DMS-1700278.
\section{Main proof}
In all the paper, we will use the notation $\indic_{\eqref{glog}}$ to indicate a term which is only present in the logarithmic cases \eqref{glog} and $\indic_{\s<\d-1}$ for a term which is present only if $\s<\d-1$. Below,  the principal value (which may be omitted for $\s <\d-1$) is defined by 
$P. V.\int_{\R^{\d} \backslash \{x\} } := \lim_{r\to 0} \int_{\R^{\d}\backslash B(x,r)} .$

Differentiating from formula \eqref{energydef}, we have 
\begin{lem}\label{lem31}
If $\XN^t$ is a solution of \eqref{mfgf}, then
\begin{multline}\label{f1}
\partial_t F_N(\XN^t, \mu^t)= - 2 N^2  \int_{\R^\d} \left|P.V. \int_{\R^\d\backslash \{x\} } \nab \g(x-y)d (\mu_N^t -  \mu^t)(y)\right|^2 d\mu_N^t(x) \\+ 2N^2 \int_{\R^\d} \F* (\mu^t-\mu_N^t) \cdot 
\nab (h^{\mu_N^t}-h^{\mu^t}) d\mu_N^t
\\- N^2 \iint_{\R^{\d}\times \R^{\d} \backslash \triangle} \( (\nab h^{\mu^t}+\F* \mu^t)(x)- (\nab h^{\mu^t} +\F*\mu^t) (y)\)\cdot  \nab \g(x-y) d (\mu_N^t - \mu^t)(x)d (\mu_N^t -  \mu^t)(y).
\end{multline} 
If $\XN^t$ is a solution of \eqref{cons}, 
then 
\begin{multline}\label{f2}
\partial_t F_N(\XN^t, \mu^t)=  -N^2 \iint_{\R^{\d}\times \R^{\d} \backslash \triangle} \mathbb{J}\( \nab h^{\mu^t}(x)- \nab h^{\mu^t} (y)\)\cdot  \nab \g(x-y) (\mu_N^t - \mu^t)(x) (\mu_N^t -  \mu^t)(y).
\end{multline} 
\end{lem}

\begin{remark}\label{remV}
In the case of an added term $\frac{1}{N}\sum_{i=1}^N\mathsf{V}(x_i)$ in the evolutions, one obtains an additional term 
$$ -2N^2 \iint_{\R^{\d}\times \R^{\d} \backslash \triangle} (\mathsf{V}(x)-\mathsf{V} (y))\cdot  \nab \g(x-y) d (\mu_N^t - \mu^t)(x)d (\mu_N^t -  \mu^t)(y)$$ which can be handled like the others using Proposition \ref{32} if $\mathsf{V}$ is globally Lipschitz.
\end{remark}
\begin{proof}
We note that if $\s\ge \d-1$, $\nab \g$ is not integrable near $0$, so $\nab \g* \mu$ should be understood in a distributional sense and $\mu\nab (\g* \mu)=\mu \g* \nab \mu$ as well, assuming that $\mu$ is regular enough.
We may also check that this distributional definition is equivalent to defining 
$$\nab h^\mu(x)= P.V. \int_{\R^{\d}\backslash \{x\}} \nab \g(x-y) d\mu(y).$$
In the case \eqref{mfgf}, we have
\begin{eqnarray*}
\partial_t F_N(\XN^t, \mu^t)& = & N^2 \partial_t \iint_{\R^{\d}\times \R^{\d}} \g(x-y) d\mu^t(x)d\mu^t(y)+ \partial_t \sum_{i\neq j} \g(x_i^t-x_j^t)
\\ &&- 2 N \partial_t\sum_{i=1}^N \int_{\R^{\d}} \g(x_i^t- y) d\mu^t(y)\\
& = & - 2N^2 \int_{\R^{\d}} |\nab h^{\mu^t} |^2  (x) d\mu^t(x)    - 2 N^2 \int_{\R^\d}  \nab h^{\mu^t} (x)\cdot \F* \mu^t(x) d\mu^t(x)  \\ & & -2\sum_{i=1}^N \left|\sum_{j\neq i} \nab \g(x_i^t-x_j^t) \right|^2 
  - 2\sum_{i=1}^N \(\sum_{j\neq i} \nab \g(x_i^t-x_j^t)   \cdot   \sum_{j\neq i} \F(x_i^t-x_j^t) \)
  \\&&+ 2N \sum_{i \neq j} \nab h^{\mu^t} (x_i^t) \cdot\( \nab \g(x_i^t-x_j^t) +\F(x_i^t-x_j^t)\)\\ && +2N \sum_{i=1}^N P.V. \int_{\R^{\d}\backslash \{x_i^t\}}  (\nab h^{\mu^t} +\F*\mu^t) (x) \cdot \nab \g(x-x_i^t) d\mu^t(x).\end{eqnarray*}
We then recombine the terms to obtain
\begin{eqnarray*}
\partial_t F_N(\XN^t, \mu^t) &= & -2N^2 \int_{\R^{\d}}\left|P.V. \int_{\R^{\d} \backslash \{x\}}  \nab \g(x-y) d(\mu_N^t-\mu^t) (y)\right|^2 d\mu_N^t (x) 
\\
 && -2N^2 \int_{\R^{\d}}| \nab h^{\mu^t}|^2  d\mu^t  -2N^2  \int_{\R^\d}  \nab h^{\mu^t} (x)\cdot \F* \mu^t(x) d\mu^t(x) 
 \\
&&-2N \int_{\R^\d} \( \nab h^{\mu_N^t}(x) \cdot  \int_{\R^\d \backslash \{x\}} \F(x-y) d\mu_N^t(y) \) d\mu_N^t(x)+2N^2 \int_{\R^{\d}}|  \nab h^{\mu^t}|^2  d\mu_N^t\\
& & + 2N^2 \int_{\R^{\d}}  \nab h^{\mu^t} (x) \cdot \int_{\R^{\d} \backslash \{x\} }(\nab \g+\F)(x-y) d\mu_N^t(y) d\mu_N^t(x)\\
&& +2 N^2 \int_{\R^{\d}} P.V. \int_{\R^{\d} \backslash \{y\}} (\nab h^{\mu^t}+\F*\mu^t)(x) \cdot  \nab\g(x-y) d\mu^t(x) d\mu_N^t(y).\end{eqnarray*}
We next recognize that all the terms except the first can be recombined and symmetrized  into 
\begin{multline*}
- N^2  \iint_{\triangle^c} \( (\nab h^{\mu^t} +\F* \mu^t) (x)-( \nab h^{\mu^t}+\F* \mu^t )(y) \)  \cdot \nab \g(x-y) d(\mu_N^t-  \mu^t)(x) d(\mu_N^t-  \mu^t)(y)\\
+ 2N^2 \int_{\R^\d} \F * (\mu^t-\mu_N^t) \cdot \nab ( h^{\mu_N^t}-h^{\mu^t} ) d\mu_N^t\end{multline*}
which gives the desired formula.

In the case \eqref{cons} we have
\begin{multline*}
\partial_t F_N(\XN^t, \mu^t)= N^2 \partial_t \iint \g(x-y) d\mu^t(x)d\mu^t(y)+ \partial_t \sum_{i\neq j} \g(x_i^t-x_j^t)
- 2 N \partial_t\sum_{i=1}^N \int_{\R^{\d}} \g(x_i^t- y) d\mu^t(y)\\
= 2N \sum_{i\neq j} \nab h^{\mu^t} (x_i^t)\cdot \mathbb{J}\nab \g(x_i^t-x_j^t)+ 2N\sum_{i=1}^N  P.V. \int_{\R^{\d}\backslash\{x_i^t\}}  \mathbb{ J} \nab h^{\mu^t} (x)\cdot \nab \g(x-x_i^t) d\mu^t(x)\end{multline*}
We then rewrite this as 
\begin{eqnarray*}
\partial_t F_N(\XN^t, \mu^t)& = & 2N^2 \int_{\R^{\d}}  \nab h^{\mu^t}(x) \cdot \int_{\R^{\d} \backslash \{x\}}  \mathbb{J}\nab \g(x-y) d\mu_N^t(y) d\mu_N^t(x)
\\
&& +2 N^2 \int_{\R^{\d}}  P.V. \int_{\R^{\d}\backslash\{y\}}  \mathbb{ J} \nab h^{\mu^t} (x)\cdot \nab \g(x-y) d\mu^t(x) d\mu_N^t(y)
.\end{eqnarray*}
By antisymmetry of $\mathbb{J}$, we recognize that the right-hand side can be symmetrized into
$$-N^2 \iint_{\triangle^c}  \mathbb{J}\( \nab h^{\mu^t}(x)- \nab h^{\mu^t}(y)\)  \cdot \nab \g(x-y) d(\mu_N^t-  \mu^t)(x) d(\mu_N^t-  \mu^t)(y).$$
\end{proof}

The main point is thus to control the last term in the right-hand side of \eqref{f1} or \eqref{f2} which is done via  Proposition \ref{32}.

For the dissipative case with the added force, we will also need
\begin{lem}\label{lemforce}
Assume $ \F\in \dot{H}^{\frac{\d-\s}{2}} (\R^\d)\cap C^{0,\alpha}(\R^\d)$ for some $\alpha>0$. 
Then there exists $\lambda>0$  and $C>0$ depending only on $\alpha, \s, \d$ such that  for every $t$,
\begin{multline*}
N^2 \int_{\R^\d} \F* (\mu^t-\mu_N^t)\cdot \nab (h^{\mu_N^t}-h^{\mu^t}) d\mu_N^t
 \le  N^2 \int_{\R^\d} \left|P.V. \int_{\R^\d\backslash \{x\} } \nab \g(x-y)d (\mu_N^t -  \mu^t)(y)\right|^2 d\mu_N^t(x)  
\\+
C \|\F\|^2_{\dot{H}^{\frac{\d-\s}{2}} (\R^\d)}
\( F_N(\XN^t, \mu^t)+ (1+\|\mu^t\|_{L^\infty} )  N^{1+\frac{\s}{\d}}+2 \( \frac{N}{\d}\log N\) \indic_{\eqref{glog}} \)
+  C\|\F\|^2_{C^{0, \alpha}(\R^\d)} N^{2-\frac{2\lambda}{\d}}.\end{multline*}
\end{lem}

Noting that by assumption $\mu^t \in \cap_{p=1}^\infty L^p(\R^\d)$ and taking $p$ to be the conjuguate exponent to $q$, 
$$\|\nab \F* \mu^t\|_{L^\infty} \le \|\nab \F\|_{L^q} \|\mu^t\|_{L^p} ,$$ we then immediately deduce from Lemma \ref{lem31}, Lemma \ref{lemforce} and Proposition \ref{32} that 
\begin{multline*}\partial_t F_N(\XN^t, \mu^t)\le C \(\|\nab^2 h^{\mu^t}\|_{L^\infty(\R^\d)}+\|\nab \F\|_{L^q(\R^\d)}+ \|\F\|^2_{\dot{H}^{\frac{\d-\s}{2}} (\R^\d)}\)\\ 
\times \Big[ \( F_N(\XN^t, \mu^t) + (1+\|\mu^t\|_{L^\infty} )  N^{2-\frac{\d-\s}{\d(\d+1)}}+ N^{\frac32}+2\(\frac{N}{\d}\log N\) \indic_{\eqref{glog}} \) + 
 C  \|\F\|^2_{C^{0, \alpha}(\R^\d)} N^{2-\frac{2\lambda}{\d}}  \\
+ C  \begin{cases}
  (1+ \|\mu^t\|_{C^{\sigma}}) N^{2-\frac{1}{\d}}+
 \(  \|\nab h^{\mu^t} \|_{L^\infty} +  \|\nab^2 h^{\mu^t} \|_{L^\infty}\) \|\mu\|_{C^{\sigma}} N^{1+\frac{\s +1-\sigma}{\d}}\Big] \  \text{if} \  \s \ge \d-1\\
 (1+ \|\mu^t\|_{L^\infty}) N^{2-\frac{1}{\d}}
+
   \|\nab h^{\mu^t} \|_{L^\infty} \|\mu^t\|_{L^\infty}  N^{1+\frac{\s+1}{\d}} +  \|\nab^2 h^{\mu^t}\|_{L^\infty}\|\mu^t\|_{L^\infty} ) N^{1+\frac{\s}{\d}} \Big]\ \text{if} \  \s < \d-1.
\end{cases}  \end{multline*}
Since $\s<\d$ and $\sigma>\s-\d+1$,  this implies by Gronwall's lemma and in view of \eqref{nmut} that for every $t\le T$,
\begin{equation*}F_N(\XN^t, \mu^t) \le \(F_N(\XN^0, \mu^0) +C_1 N^{\beta}\)  e^{C_2 t} \quad \text{for some } \beta<2.\end{equation*}
In view 
Proposition \ref{procoer} below, this proves the main theorem.

\section{Formulation via the electric potential} \label{sec:def}

 \subsection{The extension representation for the fractional Laplacian}
\label{sec:deflap}
In general, the kernel $\g$ is not the convolution kernel of a local operator, but rather of a fractional Laplacian. Here we use the \textit{extension representation} popularized by \cite{caffsilvestre}: by adding one space variable  $y\in \R$ to the space $\R^{\d}$, the nonlocal operator can be transformed into a local operator of the form $-\div (\yg \nab \cdot)$.

In what follows, $\k$ will denote the dimension extension. We will take $\k=0 $ in the Coulomb cases  for which $\g$ itself is the kernel of a local operator. In all other cases, we will  take $\k=1$.

 For now, points in the space $\R^{\d}$ will be denoted by $x$, and points in the extended space $\R^{\d+\k}$ by $X$, with $X=(x,z)$, $x\in \R^{\d}$, $z\in \R^\k$. We will often identify $\R^{\d} \times \{0\}$ and $\R^{\d}$ and thus $(x_i,0)$ with $x_i$.

If $\gamma$ is chosen such that 
\begin{equation}\label{gs}
\d-2+\k+ \gamma  =\s,
\end{equation}
then, given a probability measure $\mu$ on $\R^{\d}$, the $\g$-potential generated by $\mu$, defined in $\R^{\d}$ by
\begin{equation*}
{h}^{\mu}(x) := \int_{\R^{\d}} \g(x-\tilde{x}) \, d\mu(\tilde{x})
\end{equation*}
can be extended to a function $\mathsf{h}^\mu$ on $\R^{\d+\k}$ defined by 
\begin{equation*}
\mathsf{h}^{\mu}(X) := \int_{\R^{\d}} \g(X - (\tilde{x},0)) \, d\mu(\tilde{x}), 
\end{equation*}
and this function satisfies 
\begin{equation}
\label{divh}
- \div (\yg \nab \mathsf{h}^{\mu})=  \c {\mu} \delta_{\R^{\d}}
\end{equation}
where by $\delta_{\R^{\d}}$ we mean the uniform  measure on $\R^{\d}\times \{0\}$. The corresponding values of the constants $\c$ are given in \cite[Section 1.2]{ps}.  In particular, the potential $\g$ seen as a function of $\R^{\d+\k}$ satisfies
 \begin{equation}\label{eqg}
 - \div (\yg \nab \g)= \c \delta_0.
 \end{equation}


To summarize, we will take 
\begin{itemize}
\item $\k=0, \gamma=0$ in the Coulomb cases. 
{\it The reader only interested in the Coulomb cases may thus just ignore the $\k$ and the weight $\yg$ in all the integrals.}
\item $\k=1, \gamma = \s - \d + 2 - \k$ in the  Riesz cases and in the one-dimensional logarithmic case (then we mean $\s=0$). Note that our  assumption $(\d-2)_+\le \s<\d$  implies  that $\gamma$ is always in $(-1,1)$. We refer to \cite[Section 1.2]{ps} for more details.
\end{itemize}

We now make a remark on the regularity of $h^\mu$:
\begin{lem}\label{lemregu}
Assume $\mu$ is a probability density in $ C^{\theta}(\R^{\d})$ for some $\theta>\s - \d+2 $, then we have
\be\label{hmureg}\|\nab {h}^\mu\|_{L^\infty (\R^{\d})}\le  C\( \|\mu\|_{C^{\theta-1}(\R^{\d})}+\|\mu\|_{L^1(\R^{\d})}\) , \ee
and 
\be \label{hmureg2} \|\nab^2 h^\mu\|_{L^\infty(\R^{\d})} \le C\(    \|\mu\|_{C^{\theta}(\R^{\d})}+\|\mu\|_{L^1(\R^{\d})}\)  . \ee
\end{lem}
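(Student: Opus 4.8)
The plan is to work in the extended space $\R^{\d+\k}$ and use the representation \eqref{divh}, estimating $\nab\mathsf{h}^\mu$ and $\nab^2\mathsf{h}^\mu$ by splitting the Riesz kernel into a singular near-diagonal part and a smooth far part. Concretely, write $\nab h^\mu(x) = \int_{\R^\d} \nab\g(x-\tilde x)\,d\mu(\tilde x)$ (a principal value when $\s\ge\d-1$). Fix a cutoff radius, say $1$, and split $\nab\g = \nab\g\,\indic_{|x|\le 1} + \nab\g\,\indic_{|x|>1}$. The far part is bounded by $C\|\mu\|_{L^1}$ since $|\nab\g(x)|\le C$ for $|x|\ge 1$. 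For the near part one uses the $C^{\theta-1}$ regularity of $\mu$: when $\s<\d-1$, $\nab\g\in L^1_{\mathrm{loc}}$ and the integral is controlled by $\|\mu\|_{L^\infty}$; when $\d-1\le\s<\d$, one writes the principal value as $\int_{|x-\tilde x|\le 1}\nab\g(x-\tilde x)\big(\mu(\tilde x)-\mu(x)\big)\,d\tilde x$ (the symmetric cancellation of $\nab\g$ kills the constant term) and bounds the integrand by $C|x-\tilde x|^{-\s-1+(\theta-1)}\|\mu\|_{C^{\theta-1}}$, which is integrable near $0$ precisely because $\theta-1>\s-\d+1$, i.e. $\theta>\s-\d+2$. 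This gives \eqref{hmureg}.

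For \eqref{hmureg2} I would differentiate once more: $\nab^2 h^\mu(x) = \mathrm{P.V.}\int \nab^2\g(x-\tilde x)\,d\mu(\tilde x) + c\,\mu(x)\,\id$, where the extra Dirac-type term comes from the distributional identity \eqref{eqg} (the second derivative of $\g$ has a principal-value part homogeneous of degree $-\s-2$ plus a multiple of $\delta_0$, contributing $\mu(x)$). Again split at radius $1$: the far part contributes $C\|\mu\|_{L^1}$, the local Dirac term contributes $C\|\mu\|_{L^\infty}\le C\|\mu\|_{C^\theta}$, and the near principal value is handled as above but now subtracting the full first-order Taylor polynomial of $\mu$ at $x$: write $\mathrm{P.V.}\int_{|x-\tilde x|\le 1}\nab^2\g(x-\tilde x)\big(\mu(\tilde x)-\mu(x)-\nab\mu(x)\cdot(\tilde x-x)\big)\,d\tilde x$, using that both the zeroth and first order terms integrate to zero against the odd/even symmetry of $\nab^2\g$ on balls. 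The remainder is $O(|x-\tilde x|^\theta\|\mu\|_{C^\theta})$, so the integrand is $O(|x-\tilde x|^{-\s-2+\theta})$, integrable near $0$ iff $\theta>\s-\d+2$. Summing the pieces yields \eqref{hmureg2}.

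An equivalent and perhaps cleaner route, staying in the extension picture, is to note that $\mathsf{h}^\mu$ solves $-\div(\yg\nab\mathsf{h}^\mu)=\c\mu\,\delta_{\R^\d}$, and to use the explicit Poisson-type kernel for this degenerate-elliptic operator together with interior Schauder estimates for the $A_2$-weighted operator $-\div(\yg\nab\cdot)$ (as in Fabes–Kenig–Serapioni), applied on the hyperplane $\{z=0\}$; Hölder data $\mu\in C^\theta$ on the boundary support propagates to $C^{\theta+(\text{gain})}$ bounds on $\nab\mathsf{h}^\mu$, and the trace of $\nab^2\mathsf{h}^\mu$ on $\R^\d$ is then $L^\infty$. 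I would present the direct kernel-splitting argument since it is elementary and self-contained, relegating the weighted-Schauder viewpoint to a remark.

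The main obstacle is the borderline integrability bookkeeping in the principal-value estimates when $\s$ is close to $\d$: one must be careful that subtracting the Taylor polynomial of the correct order (order $1$ for $\nab^2 h^\mu$) is both \emph{legitimate} — i.e. the subtracted terms genuinely integrate to zero against the (anti)symmetric singular kernels on each ball $B(x,r)$, which requires the principal value to be taken over symmetric annuli — and \emph{sufficient}, i.e. produces the exponent $-\s-2+\theta>-\d$. This is exactly where the hypothesis $\theta>\s-\d+2$ is used and cannot be relaxed, and it is worth stating explicitly that the constant $C$ depends on $\d,\s,\theta$ but not on $\mu$.
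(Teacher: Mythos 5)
Your route is genuinely different from the paper's: the paper proves the lemma in one line by writing $h^\mu=\c\,\Delta^{\frac{\s-\d}{2}}\mu$ and invoking the standard mapping properties of the fractional Laplacian on H\"older--Zygmund spaces (citing \cite[Lemma 2.5]{du}), whereas you give a self-contained kernel-splitting (Schauder-type) estimate. Your treatment of \eqref{hmureg} is correct: $\nab \g$ is odd, so the principal value of $\nab\g$ against the constant $\mu(x)$ vanishes on symmetric annuli, the compensated near-field integrand is $O(|x-\tilde x|^{(\theta-1)-(\s+1)})$, integrable exactly when $\theta>\s-\d+2$, and the far field is controlled by $\|\mu\|_{L^1}$.

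The second-derivative step, however, has a genuine gap in all non-Coulomb cases. For $\s>\d-2$ (and for the one-dimensional logarithmic case) the kernel $\nab^2\g$ is homogeneous of degree $-\s-2<-\d$, i.e.\ hypersingular: the decomposition ``$\mathrm{P.V.}+c\,\delta_0$'' that you extract from \eqref{eqg} is valid only for kernels of degree exactly $-\d$ with vanishing spherical mean, which is the Coulomb case $\s=\d-2$. In the Riesz case the spherical mean of the diagonal entries of $\nab^2\g$ does not vanish (their trace is $\Delta\g=\s(\s+2-\d)|y|^{-\s-2}\neq0$), so $\int_{\epsilon\le|y|\le1}\partial_{ii}\g$ diverges like $\epsilon^{\d-\s-2}$ even over symmetric annuli: the principal value of $\nab^2\g$ against $\mu$ does not converge, and the subtracted zeroth-order term does \emph{not} ``integrate to zero by symmetry'' as you assert (symmetry kills only the odd first-order term). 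A second, smaller point: when $\s<\d-1$ the hypothesis allows $\theta\le 1$, and then the first-order Taylor polynomial you subtract need not even be defined. Both issues are repaired by the same standard device: do not pass through a principal-value representation of $\nab^2 h^\mu$ at all, but derive the compensated formula directly, e.g.\ start from $\partial_i h^\mu(x)=\int\partial_i\g(y)\,\mu(x-y)\,dy$ with a smooth cutoff at unit scale, take difference quotients in $x_j$, and keep the cancellation $\mu(x-y)-\mu(x)$ inside the near-field integral from the start (the cutoff commutator and the far field are controlled by $\|\mu\|_{L^\infty}+\|\mu\|_{L^1}$, and in the Coulomb case the extra $c\,\mu(x)\,\id$ term appears as you say). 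Only the zeroth-order subtraction is then needed, and your bound $O(|y|^{\theta-\s-2})$, integrable precisely when $\theta>\s-\d+2$, closes the argument. With that correction your elementary proof goes through and is a legitimate alternative to the paper's citation; as written, the justification of the key representation is wrong exactly in the super-Coulombic cases that are the point of the paper.
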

\begin{proof}
As is well known, $\g$ is (up to a constant) the kernel of $\Delta^{\frac{\d-\s}{2}}$, hence
 ${h}^\mu= \c \Delta^{\frac{\s-\d}{2}} \mu$ and the relations follow (cf. also \cite[Lemma 2.5]{du}).
\end{proof}

\subsection{Electring rewriting of the energy} \label{sec:nextorder}
We briefly recall the procedure used in \cite{rs,ps} for truncating the interaction or, equivalently, spreading out the point charges.
It will also be crucial to use the variant introduced in \cite{lsz,ls2} where we let the truncation distance depend on the point. 

For any $\eta \in (0,1)$, we define
\begin{equation}
\label{def:truncation} \g_{\eta} := \min(\g, \g(\eta)), \quad \f_{\eta} := \g - \g_{\eta}
\end{equation}
and 
\begin{equation} \label{defde}
\delta_0^{(\eta)}:= - \frac{1}{\c} \div (\yg \nab \g_{\eta}), 
\end{equation}
which is a positive  measure supported on $\partial  B(0, \eta)$.
\begin{remark}\label{nonsmooth}
This nonsmooth truncation of $\g_\eta$ can be replaced with no change by  a smooth one such that 
$$\g_\eta(x)=\g(x) \ \text{for} \  |x|\ge \eta, \quad \g_\eta(x)=cst \ \text{for } |x|\le \eta-\ep\qquad \ep<\hal \eta$$
and this way 
$\delta_0^{(\eta)}$ gets replaced by a probability measure with a regular density supported in $B(0, \eta)\backslash B(0, \eta-\ep)$.
We make this modification whenever the integrals against the singular measures may not be well-defined.\end{remark}

We will also let 
\be \label{fae}
\f_{\alpha, \eta}:=\f_\alpha-\f_\eta=\g_\eta-\g_\alpha,\ee
and we observe that $\f_{\alpha,\eta}$ has the sign of $\alpha-\eta$, vanishes outside $B(0,\max( \alpha, \eta)) $, and satisfies
\be\label{pfa} \g*(\delta_x^{(\eta)}-\delta_x^{(\alpha)})= \f_{\alpha, \eta}(\cdot - x)\ee
and \be\label{sfe}
-\div(\yg \nab \f_{\alpha, \eta})= \c (\delta_0^{(\eta)}-\delta_0^{(\alpha)}).\ee

For any configuration $\XN=(x_1, \dots, x_N)$, we define  for any $i$ the minimal distance
\be \label{defri} \rr_i= \min\( \frac{1}{4}\min_{j \neq i} |x_i-x_j| , N^{-\frac1\d}\).\ee
For any $\vec{\eta}=(\eta_1, \dots, \eta_N) \in \R^N$ and measure $\mu$, we define the electric potential
\be \label{defHN0} H_{N}^{\mu}[\XN]= \int_{\R^{\d+\k}} \g(x-y) d \(\sum_{i=1}^N \delta_{x_i}- N \mu\drd\) (y)\ee
and the  truncated  potential 
\be\label{defHN} H_{N,\vec{\eta}}^{\mu}[\XN]= \int_{\R^{\d+\k}} \g(x-y)d \(\sum_{i=1}^N \delta_{x_i}^{(\eta_i)} - N \mu\drd \) (y),\ee
 where we will  quickly drop the dependence in $\XN$. We note that 
 \be\label{HHf}
  H_{N, \vec{\eta}}^\mu[\XN]= H_{N}^\mu[\XN]- \sum_{i=1}^N \f_\eta(x-x_i).\ee
  These functions are viewed in the extended space $\R^{\d+\k}$ as described in the previous subsection, and solve  \be \label{eqnhne0} 
 -\div (\yg \nab H_{N}^{\mu})=\c \(\sum_{i=1}^N \delta_{x_i} - N \mu\drd\)\quad \text{in } \R^{\d+\k},\ee
 and
 \be \label{eqhne}
 -\div (\yg \nab H_{N,\vec{\eta}}^{\mu})=\c \(\sum_{i=1}^N \delta_{x_i}^{(\eta_i)} - N \mu\drd\)\quad \text{in } \R^{\d+\k}.\ee

 The following proposition shows how to express $F_N$ in terms of the truncated electric fields $\nab H_{N,\vec{\eta}}^\mu$. In addition, 
 we show that the quantities  $$ \int_{\R^{\d+\k}}\yg |\nab H_{N, \vec{\eta}} ^{\mu}|^2  - \c\sum_{i=1}^N \g(\eta_i)  $$ converge almost  monotonically (i.e. up to a small error) to $F_N$, while the discrepancy between the two can serve to control the energy of close pairs of points. 
 \begin{prop} \label{prop:monoto}
Let $\mu$ be a bounded probability density on $\R^{\d}$ and $\XN$ be in $(\R^{\d})^N$. We may re-write $\FN(\XN, \mu)$ as  
\begin{equation}
\label{def:FNbis}
\FN(\XN, \mu) := \frac{1}{\c} \lim_{\eta \to 0} \left(\int_{\R^{\d+\k}}\yg |\nab H_{N, \vec{\eta}} ^{\mu}|^2  - \c\sum_{i=1}^N \g(\eta_i)  \right),
\end{equation}and for any $\veta$ 
we have the bound
 \begin{multline}
\label{fnmeta2}
\sum_{i\neq j}  \(\g(x_i-x_j)- \g(\eta_i)\)_+\\ \le   \FN(\XN,\mu)   -\(\frac{1}{\c} \int_{\R^{\d+\k}}\yg |\nab H_{N,\vec{\eta}}^\mu|^2 -\sum_{i=1}^N \g(\eta_i)\) +C N\|\mu\|_{L^\infty} \sum_{i=1}^N \eta_i^{\d-\s} ,
\end{multline}
for some  $C$ depending only on $\d$ and $\s$. \end{prop}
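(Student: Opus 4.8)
The plan is to follow the now-standard renormalization procedure of \cite{rs,ps,ls2} adapted to point-dependent truncation radii. First I would fix $\XN$ and work with the auxiliary potential $H_{N,\vec\eta}^\mu$ for a uniform choice $\eta_i=\eta$ small, and later pass to general $\vec\eta$. The starting identity is that $\frac1{\c}\int \yg|\nab H_N^\mu|^2$, if it were convergent, would equal $\FN(\XN,\mu)$ plus the divergent self-energies $\sum_i \g(0)$; truncating each Dirac $\delta_{x_i}$ into the smeared measure $\delta_{x_i}^{(\eta_i)}$ (using Remark~\ref{nonsmooth} to make all integrals against these measures well-defined) replaces each divergent self-interaction $\g(0)$ by the finite quantity $\g(\eta_i)$. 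Concretely I would expand
\[
\int_{\R^{\d+\k}}\yg|\nab H_{N,\vec\eta}^\mu|^2 = \c\int_{\R^{\d+\k}} H_{N,\vec\eta}^\mu \, d\Big(\sum_i \delta_{x_i}^{(\eta_i)} - N\mu\drd\Big)
\]
by \eqref{eqhne} and integration by parts, then use \eqref{HHf} and \eqref{pfa} to compare $H_{N,\vec\eta}^\mu$ with the genuine potential $H_N^\mu$ and with the pairwise kernels $\g(x_i-x_j)$. This bookkeeping yields an exact formula of the shape
\[
\frac1{\c}\int \yg|\nab H_{N,\vec\eta}^\mu|^2 - \sum_i \g(\eta_i)
= \FN(\XN,\mu) - \sum_{i\neq j}\big(\g(\eta_i)\wedge\text{something} \big) + \text{(cross terms with }\mu\text{)},
\]
where the cross terms are integrals of $\f_{\eta_i}(\cdot-x_i)$ against $N\mu\drd$, each bounded by $CN\|\mu\|_{L^\infty}\eta_i^{\d-\s}$ since $\f_{\eta}$ is supported in $B(0,\eta)$ with $\int|\f_\eta|\le C\eta^{\d-\s}$ (here using $\s<\d$). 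Sending all $\eta_i\to0$ makes the cross terms and the pairwise corrections vanish — the kernels are integrable against the smooth $\mu$ — and monotone/dominated convergence gives the representation \eqref{def:FNbis}.

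For the quantitative bound \eqref{fnmeta2} I would \emph{not} pass to the limit but keep $\vec\eta$ fixed and track the sign of the pairwise terms. The point is that when $x_i$ and $x_j$ are far apart, $\delta_{x_i}^{(\eta_i)}$ and $\delta_{x_j}^{(\eta_j)}$ have disjoint support and $\iint \g\, d\delta_{x_i}^{(\eta_i)}d\delta_{x_j}^{(\eta_j)} = \g(x_i-x_j)$ exactly, contributing nothing to the discrepancy; the discrepancy between $\frac1{\c}\int\yg|\nab H_{N,\vec\eta}^\mu|^2 - \sum_i\g(\eta_i)$ and $\FN(\XN,\mu)$ is therefore concentrated on close pairs. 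Using $\g(x_i-x_j)-\g(\eta_i)\le \f_{\eta_i}(x_i-x_j)$ (valid because $\f_\eta=\g-\min(\g,\g(\eta))$) together with the positivity of the smearing procedure — each term $\iint\g\,d\delta_{x_i}^{(\eta_i)}d\delta_{x_j}^{(\eta_j)}$ is controlled below by $\g(\max(\eta_i,\eta_j))\ge \g(\eta_i)\wedge\g(\eta_j)$ by the maximum principle / monotonicity of $\g$ — I would obtain that $\sum_{i\neq j}(\g(x_i-x_j)-\g(\eta_i))_+$ is bounded by the asserted difference plus the same $CN\|\mu\|_{L^\infty}\sum_i\eta_i^{\d-\s}$ cross-term error. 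The superharmonicity (in the weighted sense \eqref{eqg}) of $\g$ is what guarantees these positivity statements, exactly as in \cite[Prop. 3.3]{ls2}.

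The main obstacle I anticipate is the careful treatment of the \emph{overlapping} pairs: when $|x_i-x_j|$ is comparable to $\eta_i+\eta_j$ the smeared measures overlap and $\iint\g\,d\delta_{x_i}^{(\eta_i)}d\delta_{x_j}^{(\eta_j)}$ is neither $\g(x_i-x_j)$ nor a clean function of the $\eta$'s, so one must show this quantity still dominates $\g(\eta_i)$ (or $\g(\eta_j)$) up to acceptable error — this is where the precise choice $\rr_i \le \frac14\min_{j\ne i}|x_i-x_j|$ in \eqref{defri} would normally enter to force $\eta_i\le\rr_i$ and hence keep the smearing balls well-separated, though here $\vec\eta$ is arbitrary so one genuinely needs the monotonicity of $t\mapsto\g(t)$ and the explicit structure of $\f_{\alpha,\eta}$ from \eqref{fae}. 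Handling the weight $\yg$ with $\gamma\in(-1,1)$ in the extended space (so that all the integrations by parts are licit and no boundary term at $z=\pm\infty$ or at $z=0$ is dropped) is a second, more technical, point, but it is routine given \eqref{divh}--\eqref{eqg} and the references \cite{ps,ls2}.
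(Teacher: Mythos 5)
Your treatment of the representation \eqref{def:FNbis} is essentially the paper's and is fine: Green's formula plus the decay of $H_{N,\vec\eta}^\mu$ turns $\int\yg|\nab H_{N,\vec\eta}^\mu|^2$ into the double integral of $\g$ against $\big(\sum_i\delta_{x_i}^{(\eta_i)}-N\mu\drd\big)^{\otimes 2}$, each smeared self-interaction equals $\g(\eta_i)$, and the cross terms with $\mu$ are $O\big(N\|\mu\|_{L^\infty}\sum_i\eta_i^{\d-\s}\big)$ as in \eqref{intf}, hence vanish in the limit.

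For \eqref{fnmeta2}, however, your pivotal inequality is stated in the wrong direction, and as stated it is false. Writing $I_{ij}:=\iint\g\,d\delta_{x_i}^{(\eta_i)}\,d\delta_{x_j}^{(\eta_j)}$, the direct expansion you propose gives
\begin{equation*}
\FN(\XN,\mu)-\Big(\frac1{\c}\int_{\R^{\d+\k}}\yg|\nab H_{N,\vec\eta}^\mu|^2-\sum_{i=1}^N\g(\eta_i)\Big)
=\sum_{i\neq j}\big(\g(x_i-x_j)-I_{ij}\big)-2N\sum_{i=1}^N\int_{\R^\d}\f_{\eta_i}(\cdot-x_i)\,d\mu ,
\end{equation*}
and since the last term is $O\big(N\|\mu\|_{L^\infty}\sum_i\eta_i^{\d-\s}\big)$, what must be proved is the \emph{upper} bound $I_{ij}\le\min\big(\g(x_i-x_j),\g(\eta_i)\big)=\g_{\eta_i}(x_i-x_j)$, which termwise yields $\g(x_i-x_j)-I_{ij}\ge\big(\g(x_i-x_j)-\g(\eta_i)\big)_+$. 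You instead assert a \emph{lower} bound $I_{ij}\ge\g(\max(\eta_i,\eta_j))$ and identify as the main obstacle showing that $I_{ij}$ dominates $\g(\eta_i)$ for overlapping pairs. That lower bound is false (for two barely overlapping smeared charges with $|x_i-x_j|$ close to $\eta_i+\eta_j$, one has $I_{ij}$ close to $\g(\eta_i+\eta_j)<\g(\max(\eta_i,\eta_j))$), and in any case a lower bound on $I_{ij}$ pushes the inequality the wrong way and cannot give \eqref{fnmeta2}. The correct fact is the opposite one and is immediate: by \eqref{defde}--\eqref{pfa}, $\g*\delta_{x_i}^{(\eta_i)}=\g_{\eta_i}(\cdot-x_i)$, which is pointwise $\le\g(\cdot-x_i)$ and $\le\g(\eta_i)$, so integrating against the probability measure $\delta_{x_j}^{(\eta_j)}$ gives $I_{ij}=\int\g_{\eta_j}(\cdot-x_j)\,d\delta_{x_i}^{(\eta_i)}\le\int\g(\cdot-x_j)\,d\delta_{x_i}^{(\eta_i)}=\g_{\eta_i}(x_i-x_j)$; no delicate analysis of overlapping pairs (nor the separation radii $\rr_i$ of \eqref{defri}, which play no role in this proposition) is needed. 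With that correction your route --- expanding directly at fixed $\vec\eta$ and comparing termwise with $\FN$ --- does work and is a slightly more direct version of the paper's argument, which instead compares the truncations at levels $\vec\eta$ and $\vec\alpha$ with $\alpha_i\le\eta_i$, exploits the sign of $\f_{\alpha_i,\eta_i}$ and the radial monotonicity of $\g_{\alpha_i}$, and then lets $\alpha_i\to0$ using \eqref{def:FNbis}.
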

The proof, which is an adaptation and improvement of \cite{ps,ls2},
 is postponed to Section~\ref{app}. 
 
  What makes our main proof work is the ability to find  some choice of truncation  $\vec{\eta}$ such that 
  $\int_{\R^{\d+\k}} \yg |\nab H_{N, \vec{\eta}} ^{\mu}|^2$ ({\it without} the renormalizing term $-\c \sum_{i=1}^N \g(\eta_i) $) is controlled by $F_N(\XN, \mu)$ {\it and} the balls $B(x_i, \eta_i)$ are disjoint. In view of  \eqref{fnmeta2} the former could easily be achieved by taking the $\eta_i$'s large enough, say $\eta_i = N^{-1/\d}$, but the  balls would not necessarily be disjoint. Instead the choice of $\eta_i = \rr_i$ where $\rr_i$ are the minimal distances as in \eqref{defri} allows to fulfill both requirements, as seen in  the following   

\begin{coro}\label{34}
Under the same assumptions, we have  
\be\label{bgr}
\sum_{i=1}^N \g(\rr_i) \le C \( F_N(\XN, \mu)+ (1+\|\mu\|_{L^\infty} ) N^{1+\frac{\s}{\d}} +\( \frac{N}{\d}\log N\) \indic_{\eqref{glog}} \)+  C\( \frac{N}{\d}\log N\) \indic_{\eqref{glog}} 
\ee
and
\be \label{bornehnr}
\int_{\R^{\d+\k}}\yg |\nab H_{N ,\vec{\rr}}^\mu|^2\le C\( F_N(\XN, \mu)+ (1+\|\mu\|_{L^\infty} )  N^{1+\frac{\s}{\d}}+ \( \frac{N}{\d}\log N\) \indic_{\eqref{glog}} \)\ee
for some $C$ depending only on $\s, \d$.
\end{coro}
\begin{proof}
Let us choose  $\eta_i =N^{-1/\d}$ for all $i$  in \eqref{fnmeta2} and observe that for each $i$, by definition \eqref{defri} there exists $j\neq i$ such that 
$(\g(|x_i-x_j|)- \g(N^{-1/\d}))_+ =    (\g(4\rr_i)   - \g(N^{-1/\d}) )_+ $. We may thus write  that 
\be\label{lb1}\sum_{i=1}^N ( \g(4 \rr_i)-\g(N^{-\frac{1}{\d}}) )_+\le  F_N( \XN, \mu) - \frac{1}{\c}\int_{\R^{\d+\k}}\yg |\nab H_{N ,\vec{\eta}}|^2 +   N \g(N^{-\frac{1}{\d}}  ) + O(N \|\mu\|_{L^\infty} ) N^{\frac{\s}{\d}} . \ee
from which \eqref{bgr} follows.

Let us next choose   $\eta_i=\rr_i$ in \eqref{fnmeta2}. Using that $\rr_i \le N^{-1/\d}$, this yields
$$ 0 \le F_N(\XN, \mu) -  \frac{1}{\c} \int_{\R^{\d+\k}}\yg |\nab H_{N,\vec{\rr}}|^2 + \sum_{i=1}^N \g(\rr_i) + O(N\|\mu\|_{L^\infty} ) N^{\frac{\s}{\d}} .$$
Combining with \eqref{bgr}, \eqref{bornehnr} follows.

\end{proof}
 
 From \eqref{bornehnr} we directly obtain that $F_N$ is bounded below:
 \begin{coro}\label{corominob}
 Under the same assumptions we have 
 \be  F_N(X_N, \mu) \ge -\(\frac{N}{\d}\log N\)\indic_{\eqref{glog}} - C N^{1+\frac\s\d} 
 \ee for some $C>0$ depending only on $\d, \s$ and $\|\mu\|_{L^\infty}.$
 \end{coro}

\subsection{Coerciveness of the modulated energy}
Here we prove that  the modulated energy does metrize the convergence of $\mu_N^t$ to $\mu^t$. \begin{prop}
\label{procoer}
For any $0<\alpha\le 1$, there exists $\lambda>0$ and $C>0$ depending only on $\alpha$, $\d$, $\s$, such that 
for any $X_N\in (\R^\d)^N$, any probability density $\mu$, and any   $\xi \in C^\infty (\R^\d)$, we have
\begin{multline}\label{rhspro}
\left|\int_{\R^\d} \xi d \( \sum_{i=1}^N\delta_{x_i}-N\mu\)\right|\le 
 C \|\xi\|_{C^{0, \alpha}(\R^\d)} N^{1-\frac{\lambda}{\d}}\\+
C \|\xi\|_{\dot{H}^{\frac{\d-\s}{2}} (\R^\d)}
\( F_N(\XN, \mu)+ (1+\|\mu\|_{L^\infty} )  N^{1+\frac{\s}{\d}}+2 \( \frac{N}{\d}\log N\) \indic_{\eqref{glog}} \)^{\hal}.\end{multline}
In particular, if $\frac{1}{N^2} F_N(X_N, \mu) \to 0$ as $N \to \infty$, we have that 
\be\frac{1}{N} \sum_{i=1}^N \delta_{x_i} \rightharpoonup  \mu\quad \text{in the weak sense}.\ee
\end{prop}
\begin{proof}
Let $\xi$ be a smooth test function on $\R^\d$. Let $\bar\xi$ denote an extension of $\xi$ to $\R^{\d+\k}$ satisfying 
$$-\div(\yg \nab \bar \xi)=0\ \text{in} \ \{z\neq 0\}.$$
By \cite{kfs}, $\yg$ being a Muckenhoupt $A_2$ weight, the function $\bar \xi$ is in $C^{0, \lambda}(\R^{\d+\k})$ for some $\lambda>0$ depending on the other parameters, with $\|\bar \xi\|_{C^{0, \lambda}} \le C\|\xi\|_{C^{0, \alpha}}$. This can also be seen from the Poisson kernel representation given in \cite{caffsilvestre}.
In addition, we also have (this can be seen in Fourier, see \cite[Section 3.2]{caffsilvestre} 
\be \label{hs}\int_{\R^{\d+\k}}\yg |\nab \bar \xi|^2 = C \|\xi\|^2_{\dot{H}^{\frac{\d-\s}{2}} (\R^\d)}.\ee

Using \eqref{eqhne} let us write  for any probability density $\mu$
\begin{equation}\label{ghd}
\int_{\R^\d} \xi\,d\(  \sum_{i=1}^N\delta_{x_i}-N\mu\)  =\int_{\R^\d} \xi d\(  \sum_{i=1}^N \delta_{x_i}-\delta_{x_i}^{(\rr_i)}\) 
   -  \frac{1}{\c} \int_{\R^{\d+\k} }\bar \xi \, \div (\yg \nab H_{N,\vecr}^\mu[X_N])
 .\end{equation}
 For the first term in the right-hand side we use the H\"older continuity of $\bar \xi$ and the fact that $\delta_{x_i}^{(\rr_i)}$ is supported in $B(x_i,\rr_i)$ and write 
 \be\label{1es}\left|\int_{\R^\d} \xi  d \sum_{i=1}^N \( \delta_{x_i}-\delta_{x_i}^{(\rr_i)}\) \right|\le C \|\xi\|_{C^{0, \alpha}(\R^\d)}\sum_{i=1}^N \rr_i^\lambda\le 
 C \|\xi\|_{C^{0, \alpha}(\R^\d)} N^{1-\frac{\lambda}{\d}}.\ee 
  For the second term, we integrate by parts and use the Cauchy-Schwarz inequality to write 
 \begin{multline}
\left|  \int_{\R^{\d+\k} }\bar \xi \, \div (\yg \nab H_{N,\vecr}^\mu[X_N])\right|= \left| \int_{\R^{\d+\k}}\yg \nab \bar \xi\cdot \nab H_{N,\vecr}^{\mu}\right|
\\
\le\( \int_{\R^{\d+\k}} \yg |\nab \bar \xi|^2 \)^\hal \( \int_{\R^{\d+\k}}\yg |\nab H_{N, \vecr}^{\mu} |^2 \)^\hal\end{multline}
 In view of  \eqref{bornehnr} and \eqref{hs} we thus find
 \begin{multline}\label{es2}
\left|  \int_{\R^{\d+\k} }\bar \xi\,  \div (\yg \nab H_{N,\vecr}^\mu[X_N])\right|
\\ \le C \|\xi\|_{\dot{H}^{\frac{\d-\s}{2}} (\R^\d)}\( F_N(X_N, \mu) + (1+\|\mu\|_{L^\infty}) N^{1+\frac\s\d} +2 \(\frac{N}{\d} \log N\) \indic_{\eqref{glog}}\)^{\hal}.\end{multline}
Inserting \eqref{1es} and \eqref{es2} into \eqref{ghd} we conclude the result.
\end{proof}

\vskip -1cm
\begin{remark}\label{remchaos}In a density formulation aiming at proving propagation of chaos,  arguing exactly as in  \cite[Lemma 8.4]{rs} for instance,  we may deduce from this result and the main theorem the convergence  of the $k$-marginal densities 
in the dual of some Sobolev space, with rate $k/N$ times the right-hand side of \eqref{rhspro}.
\end{remark}

\subsection{Proof of Lemma \ref{lemforce}}
Using the Cauchy-Schwarz inequality we have the bound
\begin{multline*}
\iint_{\triangle^c} \F* (\mu^t-\mu_N^t) \cdot \nab \g(x-y) d(\mu_N^t-\mu^t) (y) d\mu_N^t(x)\\
\le \( \int_{\R^\d} |\F*(\mu^t-\mu_N^t) |^2 d\mu_N^t\)^\hal \( \int\left| P.V. \int_{ \R^\d\backslash \{x\}}\nab \g(x-y) d(\mu_N^t-\mu^t)(y)\right|^2 d\mu_N^t(x)\)^\hal\end{multline*}
thus to prove the lemma, it suffices to show that 
\begin{multline} \label{presu}
N\|\F*(\mu^t-\mu_N^t)\|_{L^\infty} \le  C \|\F\|_{C^{0, \alpha}(\R^\d)} N^{1-\frac{\lambda}{\d}}\\+
C \|\F\|_{\dot{H}^{\frac{\d-\s}{2}} (\R^\d)}
\( F_N(\XN, \mu)+ (1+\|\mu\|_{L^\infty} )  N^{1+\frac{\s}{\d}}+2 \( \frac{N}{\d}\log N\) \indic_{\eqref{glog}} \)^{\hal}
,\end{multline} which is a direct consequence of \eqref{rhspro}.
\section{Proof of Proposition \ref{32}}
\label{secp32}

\subsection{Stress-energy tensor}
\begin{defi}
For any functions $h, f$ in $\R^{\d+\k}$ such that  $\int_{\R^{\d+\k}} \yg|\nab h|^2$ and $\int_{\R^{\d+\k}} \yg |\nab f|^2$ are finite, we 
define the stress tensor  $[h,f]$ as the $(\d+\k )\times (\d+\k)$ tensor
\be  \label{stresst}
[h,f]=\yg \( \partial_i  h \partial_j  f + \partial_i h \partial_j f \) -  \yg \nab h \cdot \nab f \delta_{ij}
 \ee where $\delta_{ij}=1$ if $i=j$ and $0$ otherwise.
 \end{defi} 
 We note that
 \begin{lem}  If $h  $ and $f$  are regular enough,  we have
 \be \label{divt}
\div [h, f]=\div(\yg \nab h) \nab f+\div(\yg \nab f) \nab h    -  \nab \yg \nab h \cdot \nab f
\ee
where $\div T$ here denotes the vector with components $\sum_i \partial_i T_{ij}$, with $j$ ranging from $1 $ to $\d + \k$.\end{lem}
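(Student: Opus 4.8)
The plan is to prove \eqref{divt} by a direct pointwise computation, carried out in $\R^{\d+\k}\setminus\{z=0\}$ where the weight $\yg$ is smooth so that all the derivatives below make classical sense; the identity then extends to the distributional statement that is actually used later, once one knows (as in \eqref{divh}, \eqref{eqhne}) that the relevant $\div(\yg\nabla\cdot)$ is a measure carried by $\R^{\d}\times\{0\}$. Two elementary remarks will be used throughout: the tensor has symmetric entries $[h,f]_{ij}=\yg\bigl(\partial_i h\,\partial_j f+\partial_j h\,\partial_i f\bigr)-\yg\,(\nabla h\cdot\nabla f)\,\delta_{ij}$, and since $\yg=|z|^\gamma$ depends only on the extension variable, $\nabla\yg$ has no component along $\R^{\d}$ (this last fact is not needed for the computation but clarifies the structure of the last term).

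First I would write $(\div[h,f])_j=\sum_i\partial_i[h,f]_{ij}$ and apply the Leibniz rule to the three pieces. The piece $\sum_i\partial_i(\yg\,\partial_i h\,\partial_j f)$ yields $(\nabla\yg\cdot\nabla h)\,\partial_j f+\yg\,(\Delta h)\,\partial_j f+\yg\,\nabla h\cdot\partial_j\nabla f$; the piece $\sum_i\partial_i(\yg\,\partial_j h\,\partial_i f)$ yields $(\nabla\yg\cdot\nabla f)\,\partial_j h+\yg\,(\partial_j\nabla h)\cdot\nabla f+\yg\,(\Delta f)\,\partial_j h$; and the trace piece $-\partial_j(\yg\,\nabla h\cdot\nabla f)$ yields $-(\partial_j\yg)(\nabla h\cdot\nabla f)-\yg\,(\partial_j\nabla h)\cdot\nabla f-\yg\,\nabla h\cdot\partial_j\nabla f$. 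Here $\Delta$ denotes the flat (unweighted) Laplacian on $\R^{\d+\k}$.

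Then I would collect the terms: the two occurrences of $\yg\,\nabla h\cdot\partial_j\nabla f$ cancel, and likewise the two occurrences of $\yg\,(\partial_j\nabla h)\cdot\nabla f$ cancel — this is the algebraic cancellation that makes the stress tensor behave well. What survives is $\bigl[\nabla\yg\cdot\nabla h+\yg\,\Delta h\bigr]\partial_j f+\bigl[\nabla\yg\cdot\nabla f+\yg\,\Delta f\bigr]\partial_j h-(\partial_j\yg)(\nabla h\cdot\nabla f)$, and recognizing $\div(\yg\nabla h)=\nabla\yg\cdot\nabla h+\yg\,\Delta h$ (and the same with $f$) gives exactly the $j$-th component of $\div(\yg\nabla h)\nabla f+\div(\yg\nabla f)\nabla h-\nabla\yg\,(\nabla h\cdot\nabla f)$, which is \eqref{divt}. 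There is no genuine obstacle: the only point deserving a word is that, because $\gamma\in(-1,1)$, $\yg$ is merely locally integrable with its gradient across $\{z=0\}$, so the computation is performed off that set and applied in combination with the structure of $\div(\yg\nabla\cdot)$ for the potentials at hand; specializing to $f=h$ recovers the classical identity $\div[h,h]=2\div(\yg\nabla h)\nabla h-\nabla\yg\,|\nabla h|^2$, which reduces to \eqref{divstres} in the Coulomb case $\k=\gamma=0$.
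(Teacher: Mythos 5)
Your computation is correct and is essentially the paper's own proof: a direct expansion of $\sum_i\partial_i[h,f]_{ij}$ by the Leibniz rule, with the second-derivative cross-terms cancelling against the trace term to leave $\div(\yg\nab h)\,\partial_j f+\div(\yg\nab f)\,\partial_j h-(\partial_j\yg)\,\nab h\cdot\nab f$. The extra remarks on working off $\{z=0\}$ and on the specialization $f=h$ are fine but not needed beyond what the paper does.
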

\begin{proof}
This is a direct computation. Below, all sums range from $1$ to $\d+\k$.
\begin{multline*}
\sum_i \partial_i [h, f]_{ij}\\ =
\sum_i \left[\partial_i (\yg \partial_i h) \partial_j f +  \partial_i (\yg \partial_i f) \partial_j h
 +\yg  \partial_{ij} h \partial_i f+ \yg \partial_{ij} f   \partial_i h\right]   - \partial_j \(\yg \sum_i \partial_i h \partial_i f\)\\
=  \div (\yg \nab h)+ \div (\yg \nab f) -  \nab h \cdot \nab f \partial_j \yg .\end{multline*}\end{proof}

In view of \eqref{divt}, we have
\begin{lem} \label{lem44}
Let $\psi : \R^{\d}\to\R^{\d}$ be Lipschitz, and if $\k=1$ let $\hat \psi$ be an extension of it to a map from $\R^{\d+\k}$ to $\R^{\d+\k}$, whose last component identically vanishes, which tends to $0$ as $|z|\to \infty$ and has the same pointwise and Lipschitz bounds as $\psi$. \footnote{Such an extension exists, for instance by solving the $\infty$-Laplacian in a strip, which provides an ``absolutely minimal Lipschitz extension"}
For  any measures $\mu,\nu $ on $\R^{\d+\k}$, if 
$-\div (\yg \nab \mathsf{h}^\mu)= \c \mu$ and $-\div (\yg \nab \mathsf{h}^\nu)=\c \nu$, and  assuming that   $\int_{\R^{\d+\k}} \yg |\nab \mathsf{h}^\mu|^2 $ and $ \int_{\R^{\d+\k}} \yg |\nab \mathsf{h}^\nu|^2$  are finite and the left-hand side in \eqref{identt} is well-defined, we have 
\be\label{identt}
\iint_{\R^{\d+\k}\times \R^{\d+\k}} (\hat\psi(x) -\hat \psi(y) )\cdot \nab \g(x-y)  d\mu(x) d\nu(y) = \frac{1}{\c} \int_{\R^{\d+\k}} \nab \hat  \psi(x) : [\mathsf{h}^\mu, \mathsf{h}^\nu].\ee
\end{lem}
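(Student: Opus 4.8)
My plan is to derive \eqref{identt} by rewriting \emph{both} sides in terms of the common quantity
$$ \int_{\R^{\d+\k}} \hat\psi\cdot\nab\mathsf{h}^\nu\, d\mu + \int_{\R^{\d+\k}} \hat\psi\cdot\nab\mathsf{h}^\mu\, d\nu ,$$
using the convolution structure of $\mathsf{h}^\mu,\mathsf{h}^\nu$ on the left, and the divergence formula \eqref{divt} together with an integration by parts on the right.

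\emph{Left-hand side.} Since $\mathsf{h}^\nu(X)=\int\g(X-Y)\,d\nu(Y)$ one has $\nab\mathsf{h}^\nu(x)=\int(\nab\g)(x-y)\,d\nu(y)$, and since $\g$ is even, $\nab\g$ is odd, whence $\int(\nab\g)(x-y)\,d\mu(x)=-\nab\mathsf{h}^\mu(y)$. Splitting $(\hat\psi(x)-\hat\psi(y))\cdot\nab\g(x-y)$ into $\hat\psi(x)\cdot\nab\g(x-y)$ and $-\hat\psi(y)\cdot\nab\g(x-y)$ and applying Fubini, the left-hand side of \eqref{identt} becomes $\int\hat\psi\cdot\nab\mathsf{h}^\nu\,d\mu+\int\hat\psi\cdot\nab\mathsf{h}^\mu\,d\nu$. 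Only the first $\d$ components of $\nab\g$ contribute here since $\hat\psi$ has vanishing last component, so this is unambiguous.

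\emph{Right-hand side.} Integrating by parts, $\int_{\R^{\d+\k}}\nab\hat\psi:[\mathsf{h}^\mu,\mathsf{h}^\nu]=-\int_{\R^{\d+\k}}\hat\psi\cdot\div[\mathsf{h}^\mu,\mathsf{h}^\nu]$, and by \eqref{divt} with $-\div(\yg\nab\mathsf{h}^\mu)=\c\mu$ and $-\div(\yg\nab\mathsf{h}^\nu)=\c\nu$,
$$\div[\mathsf{h}^\mu,\mathsf{h}^\nu]=-\c\,\mu\,\nab\mathsf{h}^\nu-\c\,\nu\,\nab\mathsf{h}^\mu-\nab\yg\,\big(\nab\mathsf{h}^\mu\cdot\nab\mathsf{h}^\nu\big).$$
The key structural point is that $\yg=|z|^\gamma$ depends only on the extension variable $z\in\R^\k$, so $\nab\yg$ points along $\R^\k$, whereas $\hat\psi$ has identically vanishing last component; hence $\hat\psi\cdot\nab\yg\equiv0$ and that term disappears upon contraction with $\hat\psi$. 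Thus $-\int\hat\psi\cdot\div[\mathsf{h}^\mu,\mathsf{h}^\nu]=\c\int\hat\psi\cdot\nab\mathsf{h}^\nu\,d\mu+\c\int\hat\psi\cdot\nab\mathsf{h}^\mu\,d\nu$, i.e. exactly $\c$ times the expression found for the left-hand side; dividing by $\c$ yields \eqref{identt}.

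\emph{Rigor.} The only genuine work is justifying the integrations by parts and Fubini: the contributions at infinity are controlled via the assumed finiteness of $\int\yg|\nab\mathsf{h}^\mu|^2$ and $\int\yg|\nab\mathsf{h}^\nu|^2$ (Cauchy--Schwarz, together with the pointwise/Lipschitz bounds and the decay of $\hat\psi$), while near the hyperplane $\{z=0\}$ one uses $\gamma\in(-1,1)$ so that $\yg$ is locally integrable and the fluxes of $\yg\nab\mathsf{h}^\mu$, $\yg\nab\mathsf{h}^\nu$, $[\mathsf{h}^\mu,\mathsf{h}^\nu]$ through $\{|z|=\ep\}$ tend to $0$ as $\ep\to0$. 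Since $\mu,\nu$ may be singular (point masses, or of the form $\nu\drd$), the clean route is to first truncate or mollify as in \eqref{def:truncation} so that $\mathsf{h}^\mu,\mathsf{h}^\nu$ are smooth, perform the computation exactly, and then pass to the limit using the hypothesis that the left-hand side of \eqref{identt} is well-defined — this is the type of approximation already carried out in \cite{ps,ls2}. I expect this regularization bookkeeping, rather than the (essentially one-line) algebra, to be the only delicate point.
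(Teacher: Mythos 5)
Your proposal is correct and follows essentially the same route as the paper: both rewrite the double integral as $\int \hat\psi\cdot\nab\mathsf{h}^\nu\,d\mu+\int\hat\psi\cdot\nab\mathsf{h}^\mu\,d\nu$, substitute the divergence formula \eqref{divt} for the stress tensor (with the $\nab\yg$ term killed by the vanishing last component of $\hat\psi$), integrate by parts, and extend to singular measures by approximation/density. The paper compresses the regularization step into a one-line density argument, which you simply spell out in more detail.
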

\begin{proof}
If $\mu$ is smooth enough then we may use   \eqref{divt}  to write 
\begin{eqnarray*}\iint_{\R^{\d+\k}\times \R^{\d+\k} } (\hat \psi(x) -\hat \psi(y) )\cdot \nab \g(x-y)  d\mu(x) d\nu(y) & =&
  \int_{\R^{\d+\k}} \hat \psi \cdot (\nab \mathsf{h}^{\mu} d \nu+ \nab\mathsf{h}^\nu d\mu)\\
&=& -\frac{1}{\c} \int_{\R^{\d+\k}} \hat\psi \cdot \div [\mathsf{h}^\mu, \mathsf{h}^\nu]\end{eqnarray*}
since  the last component of  $\hat \psi$ vanishes identically.
Integrating by parts, we obtain 
$$\iint_{\R^{\d+\k}\times \R^{\d+\k} }( \hat\psi(x)-\hat \psi(y)) \cdot \nab \g(x-y)  d\mu(x) d\nu(y) = \frac{1}{\c} \int_{\R^{\d+\k}} \nab \hat \psi :  [\mathsf{h}^\mu, \mathsf{h}^\nu].$$
By density, we may extend this relation to all measures $\mu, \nu$ such that both sides of \eqref{identt} make sense.
\end{proof}

\subsection{Proof of Proposition \ref{32}}
We now proceed to the proof. Given the Lipschitz map $\psi:\R^{\d}\to \R^{\d}$, we choose an extension $\hat \psi$ to $\R^{\d+\k}$ which satisfies the same conditions as in Lemma \ref{lem44}.

{\bf Step 1}: {\it renormalizing  the quantity and expressing it with the stress-energy tensor.}
 Clearly, 
\begin{multline}\label{45}
\iint_{\triangle^c}
 \( \psi(x)- \psi(y)\)\cdot  \nab \g(x-y) d(\sum_{i=1}^N \delta_{x_i}  - N\mu)(x) d(\sum_{i=1}^N \delta_{x_i} - N \mu)(y) \\= 
 \lim_{\eta\to 0}\Big[ \iint_{\R^{\d+\k}\times \R^{\d+\k}} 
 \( \hat \psi(x)- \hat \psi(y)\)\cdot  \nab \g(x-y) d(\sum_{i=1}^N \delta_{x_i}^{(\eta)} - N\mu\drd)(x) d(\sum_{i=1}^N \delta_{x_i}^{(\eta)} - N \mu\drd )(y)
\\ - \sum_{i=1}^N  \iint_{\R^{\d+\k}\times \R^{\d+\k}} 
 \( \hat\psi(x)- \hat \psi(y)\)\cdot  \nab \g(x-y) d \delta_{x_i}^{(\eta)} (x)d \delta_{x_i}^{(\eta)} (y)\Big].
\end{multline} 
Applying Lemma \ref{lem44}, in view of \eqref{defHN}, \eqref{eqhne}  we find that 
 \begin{multline}\label{414}\iint_{\R^{\d+\k}\times \R^{\d+\k}} 
 \( \hat\psi(x)-\hat \psi(y)\)\cdot  \nab \g(x-y) d(\sum_{i=1}^N \delta_{x_i}^{(\eta)} - N\mu\drd)(x)d (\sum_{i=1}^N \delta_{x_i}^{(\eta)} - N \mu\drd)(y)\\
 =  \frac{1}{\c}\int_{\R^{\d+\k}} \nab \hat \psi : [H_{N,\vec{\eta}}^\mu, H_{N,\vec{\eta}}^\mu]  .\end{multline}
 \\
 
{\bf Step 2}: {\it analysis of the diagonal terms.}
The main point is to understand how they vary with $\eta_i$.
Let $\vec{\alpha}$ be such that $\alpha_i \ge \eta_i$ for every $i$.

We may write  that 
\begin{multline}\label{m3}
 \iint_{\R^{\d+\k}\times \R^{\d+\k}} 
 \( \hat\psi(x)-\hat \psi(y)\)\cdot  \nab \g(x-y) d \delta_{x_i}^{(\eta_i)} (x) d\delta_{x_i}^{(\eta_i)} (y)
 \\-    \iint_{\R^{\d+\k}\times \R^{\d+\k}} 
 \( \hat\psi(x)-\hat \psi(y)\)\cdot  \nab \g(x-y)d  \delta_{x_i}^{(\alpha_i)} (x)d\delta_{x_i}^{(\alpha_i)} (y)
\\=  \iint_{\R^{\d+\k}\times \R^{\d+\k}}  (\hat\psi(x)-\hat\psi(y)) \cdot \nab \g(x-y) d(\delta_{x_i}^{(\eta_i)}- \delta_{x_i}^{(\alpha_i)}) (x) d(\delta_{x_i}^{(\eta_i)}- \delta_{x_i}^{(\alpha_i)}) (y)
\\+ 2 \iint_{\R^{\d+\k}\times \R^{\d+\k}}  (\hat\psi(x)-\hat\psi(y)) \cdot \nab \g(x-y) d \delta_{x_i}^{(\alpha_i)}(x) d (\delta_{x_i}^{(\eta_i)}- \delta_{x_i}^{(\alpha_i)}) (y)
 .\end{multline}
 We claim that
 \be\label{stv} \iint_{\R^{\d+\k}\times \R^{\d+\k}}  (\hat\psi(x)-\hat\psi(y)) \cdot \nab \g(x-y) d \delta_{x_i}^{(\alpha_i)}(x) d (\delta_{x_i}^{(\eta_i)}- \delta_{x_i}^{(\alpha_i)}) (y)=0.\ee
 Assuming this, 
 inserting  it to \eqref{m3} and using \eqref{pfa} and  Lemma \ref{lem44},  we conclude that 
 \begin{multline}\label{m4}
 \iint_{\R^{\d+\k}\times \R^{\d+\k}} 
 \( \hat\psi(x)- \hat\psi(y)\)\cdot  \nab \g(x-y)d  \delta_{x_i}^{(\eta)} (x)d\delta_{x_i}^{(\eta)} (y)
\\ -    \iint_{\R^{\d+\k}\times \R^{\d+\k}} 
 \( \hat\psi(x)-\hat \psi(y)\)\cdot  \nab \g(x-y) d \delta_{x_i}^{(\alpha_i)} (x)d\delta_{x_i}^{(\alpha_i)} (y)\\
 = \frac{1}{\c} \int_{\R^{\d+\k}}  \nab \hat \psi : [\f_{\alpha_i, \eta_i}(\cdot -x_i), \f_{\alpha_i, \eta_i} (\cdot -x_i)].\end{multline}
 \\
 
 {\bf Step 3}: {\it proof of \eqref{stv}}.
 Let us write the quantity in \eqref{stv}  as 
 \begin{multline}\label{ml2} 2 \iint_{\R^{\d+\k}\times \R^{\d+\k}}  (\hat\psi(x)-\hat\psi(y)) \cdot \nab \g(x-y) d\delta_{x_i}^{(2\alpha_i)}(x) d(\delta_{x_i}^{(\eta_i)}- \delta_{x_i}^{(\alpha_i)}) (y)
 \\+2 \iint_{\R^{\d+\k}\times \R^{\d+\k}}  (\hat\psi(x)-\hat\psi(y)) \cdot \nab \g(x-y)d\(\delta_{x_i}^{(\alpha_i)}- \delta_{x_i}^{(2\alpha_i)} \)(x) d\(\delta_{x_i}^{(\eta_i)}- \delta_{x_i}^{(\alpha_i)}  \) (y).\end{multline}
In view of \eqref{defde}, $\nab\g * \delta_{x_i}^{(2\alpha_i)} = \nab \g_{2\alpha_i}(\cdot -x_i) $ and in view of   \eqref{pfa}, $\nab \g *  (\delta_{x_i}^{(\eta_i)}-\delta_{x_i}^{(\alpha_i )} )= \nab 
 \f_{\alpha_i, \eta_i}(x-x_i)$. 
 We may thus rewrite the first term in \eqref{ml2} as 
 $$2 P.V. \int_{\R^{\d+\k} } \hat\psi \cdot \nab \f_{\alpha_i, \eta_i}(\cdot -x_i) d\delta_{x_i}^{(2\alpha_i)} + 2P.V.  \int_{\R^{\d+\k} }\hat \psi \cdot   \nab \g_{2\alpha_i}(\cdot -x_i) d \( \delta_{x_i}^{(\eta_i)}- \delta_{x_i}^{(\alpha_i)}  \) .$$
 But $ \nab \f_{\alpha_i, \eta_i}(\cdot -x_i) $
 is supported in $B(x_i, \alpha_i)$ while  $\delta_{x_i}^{(2\alpha_i)}$ is supported on $\partial B(x_i, 2\alpha_i)$, and in the same way $\nab \g_{\alpha_i}(\cdot -x_i)$ vanishes in $B(x_i, 2\alpha_i)$  where $ \delta_{x_i}^{(\eta_i)}- \delta_{x_i}^{(\alpha_i)}  $ is supported, so we conclude that the first term in \eqref{ml2} is zero.
The second term in \eqref{ml2} is equal by \eqref{pfa} and Lemma \ref{lem44}  to 
$$ \frac{1}{\c} \int_{\R^{\d+\k}} \nab \hat \psi : [\f_{2 \alpha_i, \alpha_i} (\cdot -x_i), \f_{\alpha_i , \eta_i} (\cdot -x_i)]  $$ and it is zero, since 
 $\f_{2 \alpha_i, \alpha_i} $ and $\f_{\alpha_i, \eta_i}$ have disjoint supports.  This finishes the proof of \eqref{stv}.
 \smallskip
 
 {\bf Step 4:} {\it combining \eqref{414} and \eqref{m4}}.
 The following lemma allows to recombine the terms obtained at different values of $\eta_i$ while making only a small error.
\begin{lem}\label{lem36}Assume that 
$\mu \in C^{\sigma}(\R^{\d})$ with $\sigma>\s-\d+1$  if $\s \ge \d-1$. Assume 
$\mu \in L^\infty(\R^{\d}) $ or $\mu \in C^\sigma(\R^{\d})$ with $\sigma>0$  if $\s <\d-1$.
If for each $i$  we have  $\eta_i<\alpha_i\le \rr_i$, then
$$
\int_{\R^{\d+\k}}\nab \hat \psi : [H_{N,\vec{\eta}}^\mu,H_{N,\vec{\eta}}^\mu]
=  \int_{\R^{\d+\k}}\nab \hat \psi :  [H_{N,\vec{\alpha}}^\mu,H_{N,\vec{\alpha}}^\mu] +\sum_{i=1}^N \int_{\R^{\d+\k}} \nab \hat \psi: [\f_{\alpha_i, \eta_i}(\cdot-x_i) , \f_{\alpha_i, \eta_i}(\cdot-x_i) ] + \mathcal E$$
 with
 \begin{multline}\label{E}|\mathcal E|\le  C\|\nab \psi\|_{L^\infty}\(   F_N(\XN, \mu)+ \(\frac{N}{\d}\log N\) \indic_{\eqref{glog}} +  (1+ \|\mu\|_{L^\infty}) N^{2-\frac{\d-\s}{\d(\d+1)}}\)
\\ + CN \min \(   \|\psi\|_{L^\infty}\|\mu\|_{L^\infty}   \sum_{i=1}^N \alpha_i^{\d-\s-1 }+  \|\nab\psi\|_{L^\infty}\|\mu\|_{L^\infty} \sum_{i=1}^N \alpha_i^{\d-\s},    \|\psi\|_{W^{1,\infty}}\| \mu\|_{C^{\sigma}}  \sum_{i=1}^N \alpha_i^{\d-\s+\sigma-1}\)
\\+ CN\begin{cases} 
  \|\nab\psi\|_{L^\infty} (1+\|\mu\|_{C^{\sigma}}) \sum_{i=1}^N \alpha_i   \qquad \text{if} \ \s \ge \d-1\\
  \|\nab\psi\|_{L^\infty} (1+\|\mu\|_{L^\infty}) \sum_{i=1}^N \alpha_i   \qquad \text{if} \ \s < \d-1\end{cases}
  \end{multline}where $C$ depends only on $\s, \d$.
\end{lem}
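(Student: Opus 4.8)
The plan is to iterate the identity \eqref{m4} obtained in Step 2, summing the single-point corrections as the truncation parameters decrease from $\alpha_i$ to $\eta_i$, and then to estimate the cross terms that arise in the process. Writing $H_{N,\vec{\eta}}^\mu = H_{N,\vec{\alpha}}^\mu - \sum_i (\f_{\alpha_i}(\cdot-x_i) - \f_{\eta_i}(\cdot-x_i)) = H_{N,\vec{\alpha}}^\mu - \sum_i \f_{\alpha_i,\eta_i}(\cdot-x_i)$ by \eqref{HHf} and \eqref{fae}, bilinearity of the stress tensor gives
\be\label{plansplit}
[H_{N,\vec{\eta}}^\mu,H_{N,\vec{\eta}}^\mu] = [H_{N,\vec{\alpha}}^\mu,H_{N,\vec{\alpha}}^\mu] + \sum_i [\f_{\alpha_i,\eta_i}(\cdot-x_i),\f_{\alpha_i,\eta_i}(\cdot-x_i)] + \text{(cross terms)},
\ee
where the cross terms are $-2\sum_i [H_{N,\vec{\alpha}}^\mu,\f_{\alpha_i,\eta_i}(\cdot-x_i)]$ together with $\sum_{i\neq j}[\f_{\alpha_i,\eta_i}(\cdot-x_i),\f_{\alpha_j,\eta_j}(\cdot-x_j)]$. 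The latter vanish identically because the balls $B(x_i,\alpha_i)\subset B(x_i,\rr_i)$ are disjoint by \eqref{defri}, so $\f_{\alpha_i,\eta_i}(\cdot-x_i)$ and $\f_{\alpha_j,\eta_j}(\cdot-x_j)$ have disjoint supports. Thus $\mathcal E = -\frac{2}{\c}\sum_i \int_{\R^{\d+\k}} \nab\hat\psi : [H_{N,\vec{\alpha}}^\mu,\f_{\alpha_i,\eta_i}(\cdot-x_i)]$, and the whole task reduces to bounding this sum of cross terms.

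For each $i$ I would further decompose $H_{N,\vec{\alpha}}^\mu$ near $x_i$ into the self-contribution $\g_{\alpha_i}(\cdot-x_i)$ (coming from the $i$-th charge, truncated at scale $\alpha_i$), the contributions of the other charges $\sum_{j\neq i}\g(\cdot-x_j)$, and the background $-N\g*(\mu\drd)$. Since $\f_{\alpha_i,\eta_i}(\cdot-x_i)$ is supported in $B(x_i,\alpha_i)$, inside which (on the disjoint ball) the other point charges $x_j$ are absent, the cross term with $\sum_{j\neq i}\g(\cdot-x_j)$ can be controlled via Lemma \ref{lem44} using the smoothness of $\sum_{j\neq i}\g(\cdot-x_j)$ on $B(x_i,\alpha_i)$; the cross term with the self-part contributes the "renormalization" scale and is handled exactly as in the support arguments of Step 3; and the cross term with the background $-N\g*(\mu\drd)$ is where the regularity hypotheses on $\mu$ enter — one bounds $\int \nab\hat\psi : [\g*(\mu\drd),\f_{\alpha_i,\eta_i}(\cdot-x_i)]$ using either $\|\psi\|_{L^\infty}$ against $\|\nab(\g*\mu)\|$ with a volume factor $\alpha_i^{\d-\s-1}$, or $\|\nab\psi\|_{L^\infty}$ against the $L^2$-energy of $\f_{\alpha_i,\eta_i}$ times that of the background field, i.e.\ $\alpha_i^{\d-\s}$, or when $\mu\in C^\sigma$ the sharper H\"older gain $\alpha_i^{\d-\s+\sigma-1}$; this produces the $\min(\cdots)$ term in \eqref{E}. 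The remaining error terms $N^{2-\frac{\s+1}{2}}$, $F_N(\XN,\mu)$, the $\indic_{\eqref{glog}}$ logarithm, and $(1+\|\mu\|_{L^\infty})N^{1+\frac\s\d}$ come from bounding $\int\nab\hat\psi : [H_{N,\vec{\alpha}}^\mu,H_{N,\vec{\alpha}}^\mu]$-type quantities and the self-energy scales $\sum_i\g(\alpha_i)$ via the Corollary's bounds \eqref{bgr}–\eqref{bornehnr}, while the last line of \eqref{E} (the $\sum_i\alpha_i$ terms) comes from the $\nab\yg$ part of \eqref{divt} in the $\k=1$ case, which is harmless since $\gamma\in(-1,1)$ but contributes a term scaling linearly in $\alpha_i$ with the Lipschitz constant of $\hat\psi$ and a norm of $\mu$.

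The main obstacle is the cross term with the background measure when $\s\ge\d-1$, i.e.\ precisely the regime $\nab\g\notin L^1_{\loc}$ where $\nab H_{N,\vec\alpha}^\mu$ is not even in $L^2$ near the points without the truncation. One must exploit the stress-tensor reformulation so that only $|\nab H_{N,\vec\alpha}^\mu|^2$ appears — which is finite by \eqref{bornehnr} — rather than $\nab H_{N,\vec\alpha}^\mu$ itself; and to get the sharp exponent one has to use the pointwise bound $|[h,f]|\le C\yg|\nab h||\nab f|$ together with Cauchy–Schwarz on $B(x_i,\alpha_i)$, estimating $\int_{B(x_i,\alpha_i)}\yg|\nab\f_{\alpha_i,\eta_i}(\cdot-x_i)|^2 \lesssim \g(\eta_i) - \g(\alpha_i)$ (or an $\alpha_i^{\d-\s}$-type quantity) and $\int_{B(x_i,\alpha_i)}\yg|\nab H_{N,\vec\alpha}^\mu|^2$, which by disjointness of the balls sums to something controlled by \eqref{bornehnr} plus $\sum_i\g(\alpha_i)$. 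Getting these volume and energy accountings to combine into exponents strictly below $2$ — using $\s<\d$, $\alpha_i\le\rr_i\le N^{-1/\d}$, $\sigma>\s-\d+1$, and the bound \eqref{bgr} on $\sum_i\g(\rr_i)$ — is the delicate bookkeeping that the statement of Lemma \ref{lem36} is designed to encapsulate.
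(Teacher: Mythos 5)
Your algebraic reduction coincides with the paper's first step: expanding the stress tensor bilinearly, noting that the corrections are supported in the disjoint balls $B(x_i,\alpha_i)$ (so cross terms between distinct balls vanish), and identifying the error as the cross term $\mathcal E=2\sum_i\int_{\R^{\d+\k}}\nab\hat\psi:[\f_{\alpha_i,\eta_i}(\cdot-x_i),H_{N,\vec{\alpha}}^\mu]$ (your sign slip and stray $1/\c$ are immaterial). The genuine gap is in how you propose to estimate this cross term. Your ``main obstacle'' paragraph rests on Cauchy--Schwarz against the energy of the correction, $\int_{B(x_i,\alpha_i)}\yg|\nab\f_{\alpha_i,\eta_i}|^2$; but by \eqref{sfe} this equals $\c(\g(\eta_i)-\g(\alpha_i))$, which diverges as $\eta_i\to0$, whereas the bound \eqref{E} is independent of $\vec\eta$ and must be, since the lemma is used together with \eqref{45} precisely in the limit $\eta\to0$. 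So this route cannot produce \eqref{E}. The paper never pairs the singular energy of $\f_{\alpha_i,\eta_i}$ against the field: using Lemma \ref{lem44} and the cancellation \eqref{stv}, it converts the cross term back into integrals of $\hat\psi$ against $\nab\f_{\alpha_i,\eta_i}$ and against the smeared charges $\delta_{x_i}^{(\eta_i)}-\delta_{x_i}^{(\alpha_i)}$ (see \eqref{eqp}--\eqref{su2}), so that only $\eta$-uniform quantities such as $\int|\f_{\alpha_i,\eta_i}|\lesssim\alpha_i^{\d-\s}$ and $\int|\nab\f_{\alpha_i,\eta_i}|\lesssim\alpha_i^{\d-\s-1}$ from \eqref{intf} ever appear.

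Two further points in your accounting are off. First, the contribution of the other particles is not controlled by ``smoothness of $\sum_{j\neq i}\g(\cdot-x_j)$ on $B(x_i,\alpha_i)$'': a particle $x_j$ may lie arbitrarily close to $B(x_i,\alpha_i)$, making this field of size $|x_i-x_j|^{-\s-1}$ there. One needs the splitting into pairs with $|x_i-x_j|\ge N^{-1/\d}$ and close pairs whose total interaction is controlled by $F_N(\XN,\mu)$ via \eqref{fnmeta2}; that is where the whole first line of \eqref{E} (the $F_N$, $(1+\|\mu\|_{L^\infty})N^{1+\frac{\s}{\d}}$, power-of-$N$ and logarithmic terms) actually originates, not from $[H_{N,\vec{\alpha}}^\mu,H_{N,\vec{\alpha}}^\mu]$-type energies or $\sum_i\g(\alpha_i)$. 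Second, the last line of \eqref{E} does not come from the $\nab\yg$ term in \eqref{divt} (it is present also in the Coulomb case $\k=0$); it comes from the Taylor remainder $\hat\psi-\hat\psi(x_i)$ hitting $N\nab\mathsf{h}^\mu$ integrated against $\delta_{x_i}^{(\eta_i)}+\delta_{x_i}^{(\alpha_i)}$, bounded by $N\|\nab\psi\|_{L^\infty}\alpha_i\|\nab h^\mu\|_{L^\infty}$, and it is exactly there, through Lemma \ref{lemregu}, that the hypothesis $\sigma>\s-\d+1$ enters; your sketch never invokes this hypothesis, so it cannot reproduce the case distinction in the last line of \eqref{E}.
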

Assuming this, and combining \eqref{45},  \eqref{414} and \eqref{m4} we find that for any $\alpha_i \le \rr_i$,
\begin{multline*}
\iint_{\triangle^c} \( \psi(x)- \psi(y)\)\cdot  \nab \g(x-y) d(\sum_{i=1}^N \delta_{x_i}  - N\mu)(x) d(\sum_{i=1}^N \delta_{x_i} - N \mu)(y) = \frac{1}{\c}\int_{\R^{\d+\k}} \nab\hat \psi : [H_{N,\vec{\alpha}}^\mu,H_{N,\vec{\alpha}}^\mu] \\ - \sum_{i=1}^N \iint_{\R^{\d+\k}\times \R^{\d+\k}} 
\(\hat \psi(x)-\hat\psi(y)\) \cdot \nab \g(x-y) d\delta_{x_i}^{(\alpha_i)} (x) d\delta_{x_i}^{(\alpha_i)}(y)
+ O(\mathcal E)
\end{multline*}
where $\mathcal E$ is as in \eqref{E}.
Using the Lipschitz character of $\psi$ and the expression of $\g$, we find that the second term on the right-hand side can be bounded by \footnote{In the case \eqref{glog} we bound instead $|x-y| |\nab \g(x-y)|$ by $1$, which yields an even better control.}
\begin{multline*}C\|\nab\psi\|_{L^\infty} \sum_{i=1}^N    \iint_{\R^{\d+\k}\times \R^{\d+\k}} \g(x-y) d\delta_{x_i}^{(\alpha_i)} (x)d \delta_{x_i}^{(\alpha_i)}(y)\\=C \|\nab\psi\|_{L^\infty}  \sum_{i=1}^N\int_{\R^{\d+\k}} \g_{\alpha_i}(\cdot -x_i)d \delta_{x_i}^{(\alpha_i)}=  C\|\nab\psi\|_{L^\infty}  \sum_{i=1}^N\g(\alpha_i)  \end{multline*} where we have used \eqref{defde}.
Choosing finally $\alpha_i= \rr_i \le N^{-1/\d}$, bounding pointwise $[H_{N,\vecr}^\mu,H_{N,\vecr}^\mu]$ by $2\yg |\nab H_{N,\vecr}^\mu|^2$ and using \eqref{bornehnr}, while using  \eqref{bgr} to bound $\sum_{i=1}^N \g(\rr_i)$,  we conclude the proof of Proposition \ref{32}.

\begin{proof}[Proof of Lemma \ref{lem36}]
First, 
we observe from \eqref{HHf} that  $[H_{N,\vec{\alpha}}^\mu,H_{N,\vec{\alpha}}^\mu]$ and $[H_{N,\vec{\eta}}^\mu,H_{N,\vec{\eta}}^\mu]
$ only differ in the balls $B(x_i,\alpha_i)$ which are disjoint since $\alpha_i\le \rr_i$, and that  in  each $B(x_i,\alpha_i)$ we have 
$$ H_{N,\vec{\eta}}^\mu=  H_{N,\vec{\alpha}}^\mu + \f_{\alpha_i ,\eta_i}(\cdot - x_i).$$  

We thus deduce that 
\begin{multline}\label{320}\int_{B(x_i,\alpha_i)}  \nab\hat\psi:\( [H_{N,\vec{\eta}}^\mu,H_{N,\vec{\eta}}^\mu]- [H_{N,\vec{\alpha}}^\mu,H_{N,\vec{\alpha}}^\mu] \)\\
= \int_{B(x_i,\alpha_i)}
\nab \hat\psi: \([\f_{\alpha_i, \eta_i}, \f_{\alpha_i, \eta_i}](\cdot- x_i)
+2[\f_{\alpha_i, \eta_i}(\cdot -x_i), H_{N,\vec{\alpha}}^\mu] \)\\
=   \int_{\R^{\d+\k}} 
\nab \hat\psi: \([\f_{\alpha_i, \eta_i}, \f_{\alpha_i, \eta_i}](\cdot- x_i)
+2[\f_{\alpha_i, \eta_i}(\cdot- x_i), H_{N,\vec{\alpha}}^\mu]\).\end{multline} There only remains to control the second part of the right-hand side.
By Lemma \ref{lem44}, we have \begin{multline*}  \int_{\R^{\d+\k}} 
\nab \hat\psi: [\f_{\alpha_i, \eta_i}(\cdot- x_i), H_{N,\vec{\alpha}}^\mu ]\\
=\c \iint_{\R^{\d+\k}\times \R^{\d+\k}}  (\hat\psi (x)-\hat \psi(y)) \cdot \nab \g(x-y) d \( \sum_{j=1}^N \delta_{x_j}^{(\alpha_j)}- N \mu\drd\) (x)d \( \delta_{x_i}^{(\eta_i)}-\delta_{x_i}^{(\alpha_i)}\)(y).
\end{multline*}
In view of \eqref{stv}, we just need  to bound the sum over $i$ of
 \begin{multline}\label{eqp} 
\c \iint_{\R^{\d+\k}\times \R^{\d+\k}}  (\hat\psi (x)-\hat \psi(y)) \cdot \nab \g(x-y) d \(  \sum_{j:j\neq i}   \delta_{x_j}^{(\alpha_j)}- N \mu \drd\) (x)d \( \delta_{x_i}^{(\eta_i)}-\delta_{x_i}^{(\alpha_i)}\)(y)
\\
=  \c \int_{\R^{\d+\k}}\hat \psi\cdot \nab \f_{\alpha_i, \eta_i}(\cdot -x_i)d\(  \sum_{j:j\neq i}   \delta_{x_j}^{(\alpha_j)}-N \mu \drd\)    \\
+ \c \int_{\R^{\d+\k} }\hat \psi \cdot \( \sum_{j: j\neq i} \nab \g_{\alpha_j}(x-x_j) -  N\nab \mathsf{h}^\mu \) d\( \delta_{x_i}^{(\eta_i)}-\delta_{x_i}^{(\alpha_i)}\),
\end{multline} where we used \eqref{pfa}.
\smallskip

{\bf Step 1:} {\it  first term in \eqref{eqp}. } Since $\f_{\alpha_i , \eta_i} (\cdot -x_i)   $ is supported in  $B(x_i, \alpha_i)$, $\delta_{x_j}^{(\alpha_j)}$ in $B(x_j, \alpha_j)$ and the balls are disjoint, one type of terms vanishes and there remains 
$$- N\c \int_{\R^{\d}} \psi\cdot \nab \f_{\alpha_i, \eta_i}(\cdot -x_i)d \mu .$$
Thanks to the explicit form of $\f_{\alpha, \eta}$ we have
$$\f_{\alpha_i, \eta_i}(\cdot -x_i)=\begin{cases}
 \g(x-x_i) - \g(\alpha_i) & \quad \text{for \ } \eta_i<|x-x_i|\le\alpha_i \\   \g(\eta_i)-\g(\alpha_i) &\quad \text{for \ }
|x-x_i|\le \eta_i \end{cases}$$ 
and 
$$\nab \f_{\alpha_i, \eta_i}(\cdot -x_i)= \nab \g(x-x_i) \indic_{ \eta_i\le |x-x_i|\le\alpha_i}.$$
It follows that  
\be \label{intf}  \int_{\R^{\d}} |\f_\alpha|\le C \alpha^{\d-\s}, \qquad \int_{\R^{\d}} |\f_{\alpha_i, \eta_i}|\le C \alpha_i^{\d-\s}, \qquad \int_{\R^{\d}} |\nab \f_{\alpha_i, \eta_i}|\le  C\alpha_i^{\d-\s-1}.\ee 
Indeed, 
it suffices to observe that 
\begin{equation}\label{intfeta}\int_{B(0,\eta)} \f_{\alpha}= C\int_0^\alpha (\g(r)-\g(\alpha))r^{\d-1}\, dr= -\frac{C}{\d}\int_0^r \g'(r) r^\d\, dr ,\end{equation}
with an integration by parts.

We may always write 
\begin{multline}\label{c2}
 \left|\int_{\R^{\d}} \psi \cdot \nab \f_{\alpha_i, \eta_i}(\cdot-x_i)  (\mu-\mu(x_i))\right|\le C \|\psi\|_{L^\infty} \|\mu\|_{C^{0,1}} \int_{\eta_i}^{\alpha_i}  \frac{r^{\d  }}{r^{\s+1}}dr\\ \le C  \|\psi\|_{L^\infty} \|\mu\|_{C^{0,1}  }\alpha_i^{\d-\s },
\end{multline}
and, integrating by parts and using \eqref{intf},
\be\label{c3}\left|\int_{\R^{\d}} \psi \cdot \nab \f_{\alpha_i, \eta_i} (\cdot-x_i) \mu(x_i)\right|\le \|\mu\|_{L^\infty}\|\nab\psi\|_{L^\infty} \int_{\R^{\d}}|\f_{\alpha_i, \eta_i}|\le \|\mu\|_{L^\infty} \|\nab\psi\|_{L^\infty} \alpha_i^{\d-\s}.\ee 
Alternatively, we may use the simpler bound derived from \eqref{intf},
\be \label{c3b} \left|\int_{\R^{\d}} \psi \cdot \nab \f_{\alpha_i,\eta_i}(\cdot - x_i) d\mu\right|\le  C \|\psi\|_{L^\infty} \|\mu\|_{L^\infty} \alpha_i^{\d-\s-1}.\ee
A standard interpolation argument yields that $\|g\|_{(C^{\sigma})^*} \le \|g\|_{(C^{1})^*}^\sigma \|g\|_{(C^0)^*}^{1-\sigma}$ so 
interpolating between \eqref{c2}--\eqref{c3} and \eqref{c3b}, we obtain 
 $$\left|\int_{\R^{\d}} \psi \cdot \nab \f_{\alpha_i, \eta_i}(\cdot-x_i)  d\mu\right|\le  C \|\psi\|_{L^\infty}^{1-\sigma} \| \psi\|_{W^{1,\infty}}^\sigma \|\mu\|_{C^{\sigma}}\alpha_i^{\d-\s+\sigma-1}.$$

We conclude that the sum over $i$ of the first terms in \eqref{eqp} is bounded by both
\be\label{t1}
C
\|\psi\|_{L^\infty}^{1-\sigma} \| \psi\|_{W^{1,\infty}}^\sigma \|\mu\|_{C^{\sigma}}\sum_i \alpha_i^{\d-\s+\sigma-1}
\quad \text{and  }\ 
C \|\psi\|_{L^\infty} \|\mu\|_{L^\infty  }\sum_i\alpha_i^{\d-\s-1} .
\ee

{\bf Step 2:}  {\it second term in   \eqref{eqp}.} We may rewrite the integral as
\begin{multline}\label{su2}
-  N  \int_{\R^{\d+\k}}\hat \psi(x_i) \cdot   \nab_{\R^{\d}}\mathsf{h}^\mu d\( \delta_{x_i}^{(\eta_i)}-\delta_{x_i}^{(\alpha_i)}\)
\\
+\sum_{j: j\neq i}  \int_{\R^{\d+\k}}\hat \psi(x_j) \cdot   \nab \g_{\alpha_j}(x-x_j)  d\( \delta_{x_i}^{(\eta_i)}-\delta_{x_i}^{(\alpha_i)}\)
 \\+ \sum_{j: j\neq i} \int_{\R^{\d+\k}}(\hat\psi-\hat \psi(x_j)) \cdot  \nab \g_{\alpha_j}(x-x_j)  d \( \delta_{x_i}^{(\eta_i)}-\delta_{x_i}^{(\alpha_i)}\)
 \\
+  O
\( N \|\nab\psi\|_{L^\infty} \int_{\R^{\d+\k}} |x-x_i| \left| \nab_{\R^{\d}}{\mathsf{h}}^{\mu}  \right| d\( 
 \delta_{x_i}^{(\eta_i)}+\delta_{x_i}^{(\alpha_i)} \) \),
 \end{multline}
where we used that  the last component of $\hat \psi$ vanishes, so that only the derivatives along the $\R^{\d}$ directions appear.

{\bf Substep 2.1:}  {\it first term of \eqref{su2}.} We may write that 
$\delta_{x_i}^{(\eta_i)}-\delta_{x_i}^{(\alpha_i)}= -\frac{1}{\c} \div (\yg\nab \f_{\alpha_i, \eta_i} (\cdot -x_i))$ and integrate by parts twice to get 
$$\frac{1}{\c}\int_{\R^{\d+\k}}  \yg \nab \(\hat \psi(x_i) \cdot     \nab_{\R^{\d}}\mathsf{h}^\mu\)\cdot \nab \f_{\alpha_i, \eta_i} (\cdot -x_i)= 
\int_{\R^{\d}}   ( \psi (x_i) \cdot \nab \mu )\f_{\alpha_i, \eta_i} .$$
Here, we used  that 
  $-\div (\yg \nab \mathsf{h}^{\mu}) = \c \mu \drd$ and  took the $\hat\psi(x_i) \cdot \nab_{\R^{\d}}$ of this relation.
  In view of \eqref{intf}, this is then bounded by 
$$C\|\psi\|_{L^\infty} \|\nab \mu\|_{L^\infty} \alpha_i^{\d-\s}. $$
Alternatively, we may integrate by parts in $\R^{\d}$  to bound it by 
$$C\|\mu\|_{L^\infty} \( \|\psi\|_{L^\infty} \int_{\R^{\d}}|\nab \f_{\alpha_i, \eta_i}| + \|\nab \psi\|_{L^\infty} \alpha_i^{\d-\s}\).$$
  Interpolating as above, we conclude  with \eqref{intf} that the sum over $i$ of these terms is bounded by 
both  \begin{align}\label{t2} 
& C   \|\mu\|_{C^{\sigma}} \(\|\psi\|_{L^\infty}^\sigma\|\nab \psi\|_{L^\infty}^{1-\sigma} \alpha_i^{\d-\s}+ 
\|\psi\|_{L^ \infty}  \sum_i\alpha_i^{\d-\s+\sigma -\1} \) \\ \nonumber  \text{and} \ 
& \|\mu\|_{L^\infty}\( \|\psi\|_{L^\infty}\sum_i \alpha_i^{\d-\s-1}+ \|\nab \psi\|_{L^\infty} \sum_i \alpha_i^{\d-\s}\)
.\end{align}
  
  {\bf Substep 2.2:} {\it second term of \eqref{su2}.}  Arguing in the same way as for the first term, using that 
  $-\div(\yg \nab \g_{\alpha_j}(\cdot-x_j))=\c \delta_{x_j}^{(\alpha_j)}$ and  the disjointness of the balls, we find that this  term  vanishes.
 
{\bf Substep 2.3:} {\it third term in \eqref{su2}.} We separate the sum into two pieces and   bound this term by 
\begin{multline}\label{mus}
\sum_{j\neq i, |x_i-x_j|\ge N^{-\frac{\ep}{\d}} }\int_{\R^{\d+\k}}(\hat\psi-\hat \psi(x_j)) \cdot \nab \g_{\alpha_j}(\cdot-x_j)  d \( \delta_{x_i}^{(\eta_i)}-\delta_{x_i}^{(\alpha_i)}\) 
\\ + \|\nab \psi\|_{L^\infty} \sum_{j\neq i, |x_i-x_j|\le N^{-\frac\ep\d} }\int_{\R^{\d+\k}}|x-x_j| | \nab \g_{\alpha_j}(x-x_j) | d \( \delta_{x_i}^{(\eta_i)}+\delta_{x_i}^{(\alpha_i)}\) (x),
\end{multline}for some $\ep >0$ to be determined. For the first term of \eqref{mus}, we may use that $|x_i-x_j| \ge N^{-\frac\ep\d}$ to write 
$$\| \nab\( (\hat\psi-\hat \psi(x_j)) \cdot  \nab  \g_{\alpha_j} (x-x_j)\) \|_{L^\infty(B(x_i, \rr_i)) } \le C \|\nab \psi\|_{L^\infty} N^{\frac{\ep(\s+1)}{\d}}$$  and using that $\eta_i\le \alpha_i \le N^{-\frac1\d}$ we may thus bound the sum of such terms by 
$$ C \|\nab \psi\|_{L^\infty} N^{\frac{\ep(\s+1)}{\d}+2-\frac1\d}  .$$Since $|\nab \g_{\alpha}|\le |\nab \g|$, we may bound the second term in \eqref{mus} by 
\be \label{2t}
 \|\nab \psi\|_{L^\infty} \sum_{j\neq i, |x_i-x_j|\le N^{-\frac\ep\d} } |x_i-x_j|^{-\s}.\ee
To bound this, let us  choose $\eta_i =  2 N^{-\frac\ep\d}$ in \eqref{fnmeta2} to obtain that 
$$\sum_{i\neq j} \( \g(x_i-x_j)-\g(2 N^{-\frac\ep\d})\)_+\le F_N(\XN, \mu)+ N\g(2 N^{-\frac\ep\d}) + C \|\mu\|_{L^\infty} N^{ 2 -\frac{\ep(\d-\s) }{\d} }.$$
In the cases  \eqref{formeg}, it  follows that
$$\sum_{i\neq j, |x_i-x_j|\le N^{-\frac\ep\d}} \g(x_i-x_j) \le   C\(  F_N(\XN, \mu) + N^{1+ \frac{(\d-\s)\ep}{\d}} +    \|\mu\|_{L^\infty} N^{2-\frac{\ep(\d-\s)}{\d}} \) .$$
In the cases \eqref{glog}, it follows that 
$$ \sum_{i\neq j, |x_i-x_j|\le N^{-\frac\ep\d}} \log 2  \le F_N(\XN, \mu)+ \frac{N }{\d} \log N+   C(1+ \|\mu\|_{L^\infty} )N,$$
 and this suffices to bound \eqref{2t} as well.
 Choosing $\ep= \frac{1}{\d+1}$,
 we conclude in all cases that the sum over $i$ of the third terms in \eqref{su2} is bounded by 
 \be \label{t3} C\|\nab \psi\|_{L^\infty}\(    F_N(\XN, \mu)+ (1+ \|\mu\|_{L^\infty})  N^{2-\frac{\d-\s}{\d(\d+1)}} +\( \frac{N}{\d}\log N \) \indic_{\eqref{glog}} \).\ee

{\bf Substep 2.4:} {\it  fourth term in \eqref{su2}.}  We may bound it by  $O\(\|\nab\psi\|_{L^\infty} \alpha_i \|\nab_{\R^{\d}} \mathsf{h}^\mu\|_{L^\infty(\R^{\d+\k})}\).$
But since $h^\mu= \g* \mu$, it is straightforward to check that $\|\nab_{\R^{\d}}  \mathsf{h}^\mu\|_{L^\infty(\R^{\d+\k} )}\le \|\nab h^\mu\|_{L^\infty(\R^{\d})}$. 
Using \eqref{hmureg}, we conclude the sum of  these terms is bounded by 
\be\label{t4}
\begin{cases} C\sum_i \alpha_i \|\nab \psi\|_{L^\infty} (1+ \|\mu\|_{L^\infty})\quad \text{if } \s<\d-1\\
C\sum_i \alpha_i \|\nab \psi\|_{L^\infty} (1+ \|\mu\|_{C^{\sigma}})\quad \text{if } \s\ge\d-1 \ \text{and} \ \sigma>\s-\d+1
.\end{cases}\ee

\smallskip

Combining the bounds \eqref{t1}, \eqref{t2}, \eqref{t3}, \eqref{t4}  concludes the proof of the lemma.\end{proof}

\section{Proof of Proposition \ref{prop:monoto}}
\label{app}

We drop the superscripts $\mu$. 
First, $\int_{\R^{\d+\k}}\yg|\nab H_{N, \vec{\eta}}|^2 $ is a convergent integral and  
\be\label{intdoub}\int_{\R^{\d+\k}}\yg |\nab H_{N, \vec{\eta}}|^2=\c\iint_{\R^{\d+\k}\times \R^{\d+\k} } \g(x-y)d\left( \sum_{i=1}^N \delta_{x_i}^{(\eta_i)} -N \mu\drd\right)(x) d\left( \sum_{i=1}^N \delta_{x_i}^{(\eta_i)} -N \mu\drd\right)(y).\ee
Indeed, we may choose $R$ large enough so that all the points of $\XN$ are contained in the ball $B_R = B(0, R)$ in $\R^{\d+\k}$.
    By Green's formula  and \eqref{eqhne}, we have
$$
\int_{B_R}\yg |\nabla H_{N,\vec{\eta}}|^2 \\= \int_{\partial B_R}\yg H_{N,\vec{\eta}} \frac{\partial H_N}{\partial {n}} + \c\int_{B_R} H_{N,\vec{\eta}}\, d \left(   \sum_{i=1}^N \delta_{x_i}^{(\eta_i)}-N \mu\drd\right)  .$$
Since $\int d(\sum_i \delta_{x_i}-N\mu)=0$, the function $H_{N,\vec{\eta}}$ decreases like $1/|x|^{\s+1}$ and $\nab H_{N,\vec{\eta}}$ like $1/|x|^{\s+2}$ as $|x|\to \infty$, 
hence  the boundary integral tends to $0$ as $R \to \infty$, and  we may write 
$$\int_{\R^{\d+\k}}\yg|\nab H_{N, \vec{\eta}} |^2= \c \int_{\R^{\d+\k}} H_{N,\vec{\eta}} \, d \left(   \sum_{i=1}^N \delta_{x_i}^{(\eta_i)}-N \mu\drd \right)  $$ and thus by \eqref{eqhne},  \eqref{intdoub} holds.
We may next write that
\begin{multline*}
\lim_{\eta\to 0 }\Big[ \iint_{\R^{\d+\k}\times \R^{\d+\k} } \g(x-y) d\left( \sum_{i=1}^N \delta_{x_i}^{(\eta_i)} -N \mu\drd\right)(x)d \left( \sum_{i=1}^N \delta_{x_i}^{(\eta_i)} -N \mu\drd\right)(y) 
- \sum_{i=1}^N \g(\eta_i)\Big] 
\\=
\iint_{\triangle^c} \g(x-y)  d \left( \sum_{i=1}^N \delta_{x_i}-N \mu\drd\right)(x)d \left( \sum_{i=1}^N \delta_{x_i} -N \mu\drd\right)(y) 
 \end{multline*}
and we deduce in view of \eqref{intdoub} that \eqref{def:FNbis} holds.
 
 We next turn to the proof of \eqref{fnmeta2}, adapted from \cite{ps}. 
 \def\fae{\f_{\alpha_i,\eta_i}}
\def\faej{\f_{\alpha_j, \eta_j}}
From \eqref{eqhne} applied with $\vec{\eta}=\vec{\alpha}$ and 
  in view of
\eqref{defHN},  we have
$\nab H_{N,\vec{\eta}}= \nab H_{N, \vec{\alpha}} + \sum_{i=1}^N\nab \fae(\cdot -x_i)$ thus
\begin{multline*}
\int_{\R^{\d+\k}} \yg |\nab H_{N,\vec{\eta}}|^2 = \int_{\R^{\d+\k}} \yg |\nab H_{N,\vec{ \alpha}}|^2 + \sum_{i, j} \int_{\R^{\d+\k}} \yg \nab \fae (x-x_i) \cdot \nab \fae (x-x_j) 
\\ +  2 \sum_{i=1}^N\int_{\R^{\d+\k}} \yg \nab \fae(x-x_i)\cdot \nab H_{N, \vec{\alpha}}.\end{multline*}
Using \eqref{sfe}, we first write
\begin{align*}
&  \sum_{i, j} \int_{\R^{\d+\k}} \yg \nab \fae (x-x_i) \cdot \nab \fae (x-x_j) 
\\ 
& = - \sum_{i,j} \int_{\R^{\d+\k}}  \fae (x-x_i) \div (\yg \nab \fae(x-x_j))= \c\sum_{i,j} \int_{\R^{\d+\k}} \fae (x-x_i) d(  \delta_{x_j}^{(\eta_i)}- \delta_{x_j}^{(\alpha_i)}) .
\end{align*}
Next, using \eqref{eqhne}, we write
\begin{multline*}
2 \sum_{i=1}^N \int_{\R^{\d+\k}} \yg \nab \fae(x-x_i)\cdot \nab H_{N, \vec{\alpha}}
 = -2 \sum_{i=1}^N \int_{\R^{\d+\k}} \fae(x-x_i) \div (\yg \nab H_{N,\vec{\alpha}}) 
\\=  2
\c  \sum_{i=1}^N\int_{\R^{\d+\k}} \fae(x-x_i) \, d\Big( \sum_{j=1}^N \delta_{x_j}^{(\alpha_i)} - \mu\drd\Big) . 
\end{multline*}
These last two equations add up  to give a right-hand side equal to 
\begin{multline}\label{rhs}
\sum_{i\neq j} \c \int_{\R^{\d+\k}} \fae (x-x_i) d(  \delta_{x_j}^{(\alpha_i)}+ \delta_{x_j}^{(\eta_j)}) 
- 2\c \sum_{i=1}^N \int\fae(x-x_i) d \mu\drd
\\+ N\c \int_{\R^{\d+\k}} \fae d( \delta_{0}^{(\alpha_i)} + \delta_0^{(\eta_i)} )\, . \end{multline}
We then note that $\int\fae\, d ( \delta_0^{(\alpha_i)} + \delta_0^{(\eta_i)} ) = -\int \f_{\eta_i}d\delta_0^{(\alpha_i)}=-( \g(\alpha_i)-\g(\eta_i))  $ by definition of $\f_\eta$ and the fact that $\delta_0^{(\alpha)}$ is a measure supported on $\partial B(0, \alpha)$ and of mass $1$.
Secondly, by \eqref{intf} we may bound  $\int_{\R^{\d}}\fae(x-x_i) \mu\drd$ by $  C \|\mu\|_{L^\infty}\alpha_i^{\d-\s}.$

Thirdly, we observe that since $\f_{\alpha_i, \eta_i} \le 0$, the first term in \eqref{rhs} is nonpositive and we may bound it above by 
\begin{multline*}\sum_{i\neq j} \c\int_{\R^{\d+\k}} \f_{\alpha_i, \eta_i} d\delta_{x_j}^{(\alpha_j)}
\le \sum_{i\neq j}\c \int_{\R^{\d+\k}}\( \g_{\eta_i}(x-x_i)- \g_{\alpha_i}(x-x_i) \) d\delta_{x_j}^{(\alpha_j)}
\\ \le \sum_{i\neq j}\c \int_{\R^{\d+\k}}
\( \g(\eta_i) - \g_{\alpha_i}(|x_i-x_j|+\alpha_j) \)_-\end{multline*}
 where we used the fact that $\g_\alpha$ is radial decreasing.
Combining the previous relations yields
\begin{multline*}  
- C  N \|\mu\|_{L^\infty}\sum_{i=1}^N  \eta_i^{\d-\s}+ \c \sum_{i\neq j}\( \g_{\alpha_i}(|x_i-x_j|+\alpha_j) -\g(\eta_i)\)_+
\\ \le \(\int_{\R^{\d+\k}} \yg |\nab H_{N, \vec{\alpha}}|^2  -\c\sum_{i=1}^N  \g(\alpha_i)\) -\( \int_{\R^{\d+\k}}\yg |\nab H_{N,\vec{\eta}}|^2 - \c\sum_{i=1}^N \g(\eta_i)\)\end{multline*}
 and letting all $\alpha_i \to 0$  finishes the proof in view of \eqref{def:FNbis}. 

\appendix

\section{Mean-field limit for monokinetic Vlasov systems, \\with Mitia Duerinckx} \label{app1}
In this appendix, we turn to examining the mean-field limit of solutions of Newton's second-order system of ODEs, that is,
\begin{gather}\label{eq:MFL0}
\begin{cases}
\dot x_{i}=v_{i},\\
\dot v_{i}=-\frac1N\nabla_{x_i}\Hc_N(x_1,\ldots,x_N),\\
x_{i}(0)=x_{i}^0,\quad v_i(0)=v_{i}^0,
\end{cases}\qquad i=1,\ldots,N
\end{gather}
where $\Hc_N$ is the interaction energy defined in~\eqref{1.3}.
We then consider the phase-space empirical measure
\[f_N^t:=\frac1N\sum_{i=1}^N\delta_{(x_i^t,v_i^t)}~\in~\Pc(\R^{\d}\times\R^{\d}),\] where $\Pc$ denotes probability measures,
associated to a solution $Z_N^t:=((x_1^t,v_1^t),\ldots,(x_N^t,v_N^t))$ of the flow~\eqref{eq:MFL0}. If the points $(x_i^0,v_i^0)$, which themselves depend on $N$, are such that $f_N^0$ converges to some regular measure $f^0$, then formal computations indicate that for $t>0$, $f_N^t$ should converge to the solution of the Cauchy problem with initial data $f^0$ for the following Vlasov equation,
\begin{align}\label{eq:Vlasov}
\partial_t f+v\cdot\nabla_xf+(\nabla \g\ast\mu)\cdot\nabla_vf=0,\qquad \mu^t(x):=\int_{\R^{\d}}f^t(x,v)\,dv.
\end{align}
In the case of a smooth interaction kernel $\g$, such a mean-field result was first established in the 1970s by a weak compactness argument~\cite{Neunzert-Wick,Braun-Hepp}, while Dobrushin~\cite{Dobrushin-79} gave the first quantitative proof in $1$-Wasserstein distance.
In recent years much attention has been given to the physically more relevant case of singular Coulomb-like kernels, but only very partial results have been obtained. In~\cite{Hauray-Jabin-07,hj}, exploiting a Gr\"onwall argument on the $\infty$-Wasserstein distance between $f_N$ and $f$, Hauray and Jabin treated all interaction kernels $g$ satisfying $|\nabla g(x)|\le C|x|^{-s}$ and $|\nabla^2 g(x)|\le C|x|^{-s-1}$ for some $s<1$. In~\cite{jabinwang}, Jabin and Wang introduced a new approach, allowing them to treat all interaction kernels with bounded gradient.
The same problem has been addressed in~\cite{Boers-Pickl-16,Lazarovici-16,lp}, leading to results that require a small $N$-dependent cutoff of the interaction kernel.

In this appendix, we show that the method presented in the article allows to unlock the mean-field problem with Coulomb-like interaction in the simpler case of monokinetic data, that is, if there exists a regular velocity field $u^0:\R^\d\to\R^\d$ such that $v_i^0\approx u^0(x_i^0)$ for $i=1,\ldots,N$, which implies that the solutions remain ``monokinetic", at least for short times.
Justifying a complete mean-field result for non-monokinetic solutions in the Coulomb case in dimensions $\d\ge2$ remains one of the main open problems in the field.  It is expected to be rendered difficult by the possibility of concentration and filamentation in both space and velocity variables.

In the monokinetic setting, we focus on the (spatial) empirical measure
\[\mu_N^t:=\frac1N\sum_{i=1}^N\delta_{x_{i}^t}~\in~\Pc(\R^{\d}),\]
associated to a solution $Z_N^t$ of the flow~\eqref{eq:MFL0}.
If the points $x_i^0$ are such that $\mu_N^0$ converges to some regular measure $\mu^0$, then formal computations (see for instance~\cite{linz}) lead to expecting that for $t>0$, $\mu_N^t$ converges to the solution $\mu^t$ of the Cauchy problem with initial data $(\mu^0,u^0)$ of the following monokinetic version of~\eqref{eq:Vlasov}\footnote{Formally, the pair $(\mu,u)$ indeed satisfies the system~\eqref{eq:MFL} if and only if the monokinetic ansatz $f^t(x,v):=\mu^t(x)\delta_{v=u^t(x)}$ satisfies the Vlasov equation~\eqref{eq:Vlasov}.},
\begin{align}\label{eq:MFL}
\partial_t \mu+\div(\mu u)=0,\qquad \partial_t u+u\cdot\nabla u=-\nabla \g\ast\mu.
\end{align}
In the Coulomb case, these equations are known as the (pressureless) Euler-Poisson system.
Since for the second-order system~\eqref{eq:MFL0} the total energy splits into a {potential} and a {kinetic} part,
we naturally introduce a modulated \emph{total} energy taking both parts into account:
for $Z_N:=((x_1,v_1),\ldots,(x_N,v_N))$ and for a couple $(\mu,u)\in\Pc(\R^{\d})\times C(\R^{\d})$, we set
\begin{multline*}
H_N(Z_N,(\mu,u)):=N\sum_{i=1}^N|u(x_i)-v_i|^2\\
+\iint_{\R^{\d}\times\R^{\d}\setminus \triangle}\g(x-y)\,d\Big(\sum_{i=1}^N\delta_{x_i}-N\mu\Big)(x)\,d\Big(\sum_{i=1}^N\delta_{x_i}-N\mu\Big)(y),
\end{multline*}
and we view $H_N(Z_N^t,(\mu^t,u^t))$ as a good notion of distance between $\mu_N^t$ and $\mu^t$ that is adapted to the monokinetic setting. The condition $H_N(Z_N^t,(\mu^t,u^t))=o(N^2)$ indeed entails the weak convergence $\mu_N^t\rightharpoonup  \mu^t$ as in Proposition~\ref{procoer}, while due to the kinetic part it also implies that the flow $Z_N^t$ remains monokinetic, that is, $v_i^t\approx u^t(x_i^t)$.
In parallel with Theorem~\ref{th1}, our main result is a Gronwall inequality on the time-derivative of $H_N(Z_N^t,(\mu^t,u^t))$, which implies a quantitative rate of convergence of $\mu_N^t$ to $\mu^t$ in that metric.

\begin{theo}\label{th:MFL-mono}
Assume that $\g$ is of the form~\eqref{formeg} or \eqref{glog}. Assume that~\eqref{eq:MFL} admits a solution $(\mu,u)$ in $[0,T]\times\R^{\d}$ for some $T>0$, with $\mu\in L^\infty([0,T];\Pc\cap L^\infty(\R^{\d}))$ and $u\in L^\infty([0,T];W^{1,\infty}(\R^{\d})^\d)$. In the case $\s\ge \d-1$, also assume $\mu\in L^\infty([0,T];C^\sigma(\R^{\d}))$ for some $\sigma>\s-\d+1$.
Let $Z_N^t$ solve~\eqref{eq:MFL0}. Then there exist constants $C_1,C_2$ depending only on controlled norms of $(\mu,u)$ and on $\d,\s,\sigma$, and there exists an exponent $\beta<2$ depending only on $\d,\s,\sigma$, such that for every $t\in[0,T]$ we have
\[H_N(Z_N^t,(\mu^t,u^t))\le\Big(H_N(Z_N^0,(\mu^0,u^0))+C_1N^\beta\Big)e^{C_2t}.\]
In particular, if $\mu_N^0\rightharpoonup\mu^0$ and is such that
\[\lim_{N\to\infty}\frac1{N^2}H_N(Z_N^0,\mu^0)=0,\]
then the same is true for every $t\in[0,T]$, hence $\mu_N^t\rightharpoonup \mu^t$ in the weak sense.
\end{theo}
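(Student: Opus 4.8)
The plan is to mirror the proof of Theorem~\ref{th1}, but now with the modulated \emph{total} energy $H_N(Z_N^t,(\mu^t,u^t))$, which has a kinetic piece $N\sum_i |u^t(x_i^t)-v_i^t|^2$ in addition to the modulated interaction energy $F_N(X_N^t,\mu^t)$. First I would differentiate $H_N$ along the flow~\eqref{eq:MFL0}. For the kinetic part, writing $w_i^t := v_i^t-u^t(x_i^t)$ and using $\dot v_i = -\frac1N\nabla_{x_i}\Hc_N = -\frac1N\sum_{j\ne i}\nabla\g(x_i-x_j)\cdot 2$ together with the transport equation $\partial_t u + u\cdot\nabla u = -\nabla\g\ast\mu$ satisfied by the limit, one gets a term of the form $-2N\sum_i w_i^t\cdot(\nabla\g\ast\mu_N^t(x_i) - \nabla\g\ast\mu^t(x_i))$ (the mean-field force seen by the particles vs. the force in the limiting field), plus a term $-2N\sum_i w_i^t\cdot\big((\nabla u^t(x_i))\,w_i^t\big)$ coming from the Lagrangian derivative of $u^t$ along the true particle trajectory versus the limiting characteristic. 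The latter is bounded by $2\|\nabla u^t\|_{L^\infty}\cdot N\sum_i |w_i^t|^2$, i.e. directly by $\|\nabla u^t\|_{L^\infty}$ times the kinetic part of $H_N$, which is Gronwall-admissible. For the potential part, $\partial_t F_N(X_N^t,\mu^t)$ is computed exactly as in Lemma~\ref{lem31}, except that now $\dot x_i = v_i = u^t(x_i)+w_i^t$ rather than $-\frac1N\nabla_{x_i}\Hc_N$; so in place of the dissipative term $-2N^2\int|\nabla h^{\mu_N^t-\mu^t}|^2 d\mu_N^t$ one obtains a transport-type contribution governed by the velocity $u^t$ plus an extra term linear in the $w_i^t$.

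The second step is to identify and pair up the "cross" terms. The dangerous term from the kinetic derivative, $-2N\sum_i w_i^t\cdot\nabla h^{\mu_N^t-\mu^t}(x_i)$, is of the shape of a modulated-energy "force" term, and the extra $w$-linear term from $\partial_t F_N$ is its mirror image; these should combine and be absorbed, after Cauchy--Schwarz, into $\|\nabla u^t\|_{L^\infty}$ times $H_N$ plus a small multiple of the modulated energy. The remaining purely potential contributions — those involving only $u^t$ and the measures $\mu_N^t,\mu^t$ — are handled exactly as in the dissipative/conservative case: one symmetrizes them into
\[
\iint_{\triangle^c}\big(\psi(x)-\psi(y)\big)\cdot\nabla\g(x-y)\,d(\textstyle\sum_i\delta_{x_i}-N\mu^t)(x)\,d(\textstyle\sum_i\delta_{x_i}-N\mu^t)(y)
\]
with $\psi = u^t$ (so $\|\nabla\psi\|_{L^\infty}=\|\nabla u^t\|_{L^\infty}<\infty$ by hypothesis), and then invokes Proposition~\ref{32} verbatim to bound this by $C\|\nabla u^t\|_{L^\infty}\big(F_N(X_N^t,\mu^t)+\text{lower order in }N\big)$ plus the additive $o(N^2)$ corrections listed there. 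The regularity hypotheses on $\mu^t$ in the statement are exactly those needed to apply Proposition~\ref{32} and Lemma~\ref{lemregu} (to control $\nabla h^{\mu^t}$ and $\nabla^2 h^{\mu^t}$).

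Assembling everything, one obtains a differential inequality $\partial_t H_N(Z_N^t,(\mu^t,u^t)) \le C_2\,H_N(Z_N^t,(\mu^t,u^t)) + C_1 N^\beta$ with $\beta<2$ and $C_1,C_2$ depending only on the controlled norms of $(\mu,u)$ and on $\d,\s,\sigma$; Gronwall's lemma then yields the stated estimate, and the weak convergence $\mu_N^t\rightharpoonup\mu^t$ follows from Proposition~\ref{procoer} applied to the potential part of $H_N$ (which is $\le H_N$ once one checks, as in~\cite{du}, that $F_N$ bounded below is no obstruction). I expect the main obstacle to be the treatment of the cross term $-2N\sum_i w_i^t\cdot\nabla h^{\mu_N^t-\mu^t}(x_i)$: unlike the dissipative case where the analogous quantity appears as a perfect square with a good sign, here it is a signed pairing between the velocity defect $w_i^t$ and the renormalized electric field evaluated \emph{at the particle locations}, so one must show it is controlled by $\sqrt{(\text{kinetic part})}\cdot\sqrt{N^2\int\yg|\nabla H_{N,\vec{\rr}}^{\mu^t}|^2}$ and then use the Corollary after Proposition~\ref{prop:monoto} to bound the second factor by $\sqrt{F_N + o(N^2)}$; making the evaluation-at-points rigorous requires the truncation $\vec{\rr}$ and a bound on $\sum_i |\nabla H_{N,\vec{\rr}}^{\mu^t}(x_i)|^2$, which is where the argument is genuinely more delicate than in the first-order case and where I would spend most of the effort.
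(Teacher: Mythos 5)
There is a genuine gap, and it sits exactly where you say you would ``spend most of the effort''. In the paper's proof no cross term of the form $-2N\sum_i w_i\cdot\nabla h^{\mu_N-\mu}(x_i)$ (with $w_i:=u(x_i)-v_i$) survives at all: it cancels \emph{identically}, and this exact cancellation is the whole point of modulating the total energy. Indeed, differentiating the kinetic part and inserting $\dot v_i=-\frac1N\nabla_{x_i}\Hc_N$ together with $\partial_t u+u\cdot\nabla u=-\nabla\g\ast\mu$ produces, besides the good term $-2N\sum_i\nabla u(x_i):w_i\otimes w_i$, the force-discrepancy term
\[
2N\sum_{i=1}^N w_i\cdot\Big(\tfrac1N\textstyle\sum_{j\ne i}\nabla\g(x_i-x_j)-\nabla(\g\ast\mu)(x_i)\Big),
\]
while differentiating the potential part $F_N(X_N,\mu)$ along the \emph{actual} velocities $\dot x_i=v_i=u(x_i)-w_i$ and along $\partial_t\mu=-\div(\mu u)$ produces, from the $-w_i$ part of $v_i$, exactly the negative of that same expression. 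After this algebraic cancellation (which is the second-order analogue of the ``straightforward simplifications'' in the weak--strong stability lemma for \eqref{eq:MFL}), only two terms remain: $-2N\sum_i\nabla u(x_i):w_i\otimes w_i$, bounded by $\|\nabla u\|_{L^\infty}$ times the kinetic part, and the symmetrized double integral $N^2\iint_{\triangle^c}(u(x)-u(y))\cdot\nabla\g(x-y)\,d(\mu_N-\mu)^{\otimes 2}$, to which Proposition~\ref{32} applies with $\psi=u$; Gronwall then concludes. Your proposal has the right architecture (modulated total energy, Proposition~\ref{32} with $\psi=u$, Gronwall), but it stops short of carrying out this bookkeeping and instead declares the cross term to be a surviving object requiring a new estimate.

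Moreover, the fallback you sketch for that (unnecessary) term would not go through as stated: controlling it by $\sqrt{\text{kinetic part}}\cdot\big(\sum_i|\nabla H_{N,\vec\rr}^{\mu}(x_i)|^2\big)^{1/2}$ requires pointwise values of the (truncated) electric field at the particle locations, and these are not controlled by the energy $\int\yg|\nabla H_{N,\vec\rr}^{\mu}|^2$ that the Corollary to Proposition~\ref{prop:monoto} bounds; an $L^2$ bound on the field does not dominate a sum of point evaluations without substantial extra work (local averaging/smearing arguments), none of which is supplied. So as written the proof is incomplete at its acknowledged central step; the fix is not a finer estimate but the exact cancellation described above, which you should exhibit by expanding $\partial_t H_N$ fully, substituting both \eqref{eq:MFL0} and \eqref{eq:MFL}, and regrouping the $v_i$- and $w_i$-paired force terms into $u(x_i)$-paired ones before symmetrizing.
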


\begin{remark}
We briefly comment on the regularity assumptions on the solution $(\mu,u)$ of the mean-field system~\eqref{eq:MFL} in the Coulomb case.
While global existence of a weak solution is known in some settings~\cite{Chen-Wang-96,Guo-98,Ionescu-Pausader-13},
existence of a smooth solution can in general only hold locally in time due to the possible wave breakdown~\cite{Perthame-90,Engelberg-96}.
The local-in-time existence of a smooth solution has then been established in~\cite{Makino-86,Makino-Ukai-87,Gamblin-93}, ensuring that the above regularity assumptions on $(\mu,u)$ indeed hold locally in time for smooth initial data.
In addition, a critical threshold phenomenon was discovered by Engelberg, Liu, and Tadmor~\cite{Engelberg-Liu-Tadmor-01,Liu-Tadmor-03}: global existence of a smooth solution actually holds whenever the initial data satisfies some critical condition, while finite-time breakdown occurs otherwise. For some initial data, the required regularity assumptions on $(\mu,u)$ may thus even hold globally in time.
\end{remark}

Before turning to the proof of Theorem~\ref{th:MFL-mono}, we start with a weak-strong stability principle for the mean-field system~\eqref{eq:MFL}, analogous to \eqref{wsu} for the mean-field equation~\eqref{limeq} in the first-order case.

\begin{lem}
Let $(\mu_1^t,u_1^t)$ and $(\mu_2^t,u_2^t)$ denote two smooth solutions of~\eqref{eq:MFL} in $[0,T]\times\R^{\d}$. Then there is a constant $C$ depending only on $\d$ such that, in terms of
\[H((\mu_1^t,u_1^t),(\mu_2^t,u_2^t)):=\int_{\R^{\d}}|u_1^t-u_2^t|^2\mu_1^t+\int_{\R^{\d}}\int_{\R^{\d}}\g(x-y)\,d(\mu_1^t-\mu_2^t)(x)\,d(\mu_1^t-\mu_2^t)(y),\]
we have
\[H((\mu_1^t,u_1^t),(\mu_2^t,u_2^t))\le e^{C\int_0^t\|\nabla u_2^u\|_{L^\infty}du}H((\mu_1^0,u_1^0),(\mu_2^0,u_2^0)).\qedhere\]
\end{lem}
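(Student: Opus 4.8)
The plan is to run the relative-energy computation behind \eqref{wsu}--\eqref{calc}, now carrying an extra kinetic term. Write $w^t:=u_1^t-u_2^t$ and $h_i^t:=\g\ast\mu_i^t$, and split $H((\mu_1^t,u_1^t),(\mu_2^t,u_2^t))=K(t)+P(t)$ with $K:=\int_{\R^{\d}}|w|^2\mu_1$ the modulated kinetic energy and $P:=\iint\g(x-y)\,d(\mu_1-\mu_2)(x)\,d(\mu_1-\mu_2)(y)$ the Coulomb/Riesz part, which by \eqref{divh} and Green's formula (exactly as in the derivation of \eqref{intdoub}, the mass of $\mu_1-\mu_2$ being $0$) equals $\tfrac1\c\int_{\R^{\d+\k}}\yg|\nabla(\mathsf h_1-\mathsf h_2)|^2$ in the extended half-space. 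The goal is to prove $\partial_t(K+P)\le C\|\nabla u_2\|_{L^\infty}(K+P)$ and then invoke Gronwall's lemma.

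First I would differentiate $P$. Using only the continuity equations $\partial_t\mu_i=-\div(\mu_iu_i)$ and the symmetry of $\g$, one gets $\partial_t P=2\int_{\R^{\d}}\nabla(h_1-h_2)\cdot(\mu_1u_1-\mu_2u_2)$, and writing $\mu_1u_1-\mu_2u_2=\mu_1w+(\mu_1-\mu_2)u_2$ splits this into a \emph{mixed} term $2\int\nabla(h_1-h_2)\cdot w\,\mu_1$ and a \emph{pressure} term $2\int u_2\cdot\nabla(h_1-h_2)\,d(\mu_1-\mu_2)$. Next I would differentiate $K$, using the momentum equations, which via the rearrangement $u_2\cdot\nabla u_2-u_1\cdot\nabla u_1=-u_1\cdot\nabla w-w\cdot\nabla u_2$ give $\partial_t w=-u_1\cdot\nabla w-w\cdot\nabla u_2-\nabla(h_1-h_2)$, together with $\partial_t\mu_1=-\div(\mu_1u_1)$. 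The two $u_1$-transport contributions (from $\partial_t w$ and from $\partial_t\mu_1$) cancel, leaving $\partial_t K=-2\int w\cdot(w\cdot\nabla u_2)\mu_1-2\int w\cdot\nabla(h_1-h_2)\mu_1$.

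Adding the two computations, the mixed terms $\pm2\int w\cdot\nabla(h_1-h_2)\mu_1$ cancel and one is left with $\partial_t(K+P)=-2\int w\cdot(w\cdot\nabla u_2)\mu_1+2\int u_2\cdot\nabla(h_1-h_2)\,d(\mu_1-\mu_2)$. The first term is at most $2\|\nabla u_2\|_{L^\infty}K$. For the second term---which is precisely the one that forced the use of the stress-energy tensor in \eqref{calc}---I would apply Lemma~\ref{lem44} with $\psi=u_2$ and $\mu=\nu=(\mu_1-\mu_2)\drd$, rewriting it as $\tfrac1\c\int_{\R^{\d+\k}}\nabla\hat u_2:[\mathsf h_1-\mathsf h_2,\mathsf h_1-\mathsf h_2]$, and then use the pointwise bound $|[\mathsf h,\mathsf h]|\le 2\yg|\nabla\mathsf h|^2$ to bound it by $C\|\nabla u_2\|_{L^\infty}\int_{\R^{\d+\k}}\yg|\nabla(\mathsf h_1-\mathsf h_2)|^2=C\c\|\nabla u_2\|_{L^\infty}P$. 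Gronwall's lemma then yields the claimed estimate $H((\mu_1^t,u_1^t),(\mu_2^t,u_2^t))\le e^{C\int_0^t\|\nabla u_2\|_{L^\infty}}H((\mu_1^0,u_1^0),(\mu_2^0,u_2^0))$.

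Since $(\mu_i,u_i)$ are assumed smooth (hence $h_i$, $\mathsf h_i$ are regular) and $\mu_1-\mu_2$ has mass $0$ so that $h_1-h_2$ and $\nabla(h_1-h_2)$ decay fast enough for all boundary terms to vanish, every manipulation above is a routine integration by parts; the only genuine points of care are the use of the extended half-space with the weight $\yg$ in the Riesz cases so that $P$ is a bona fide local Dirichlet energy and the stress-energy identities \eqref{divt}--\eqref{identt} apply, and the justification of the decay just mentioned (as in the proof of Proposition~\ref{prop:monoto}). The real difficulty of this circle of ideas appears only when $\mu_2$ is replaced by the empirical measure and the singular self-interactions must be renormalized, i.e.\ in Theorem~\ref{th:MFL-mono} itself rather than in this weak-strong lemma.
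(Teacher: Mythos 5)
Your proposal is correct and takes essentially the same route as the paper: both computations reduce $\partial_t H$ to $-2\int_{\R^\d}\mu_1\nabla u_2:(u_1-u_2)\otimes(u_1-u_2)+2\int_{\R^\d} u_2\cdot\nabla(h_1-h_2)\,d(\mu_1-\mu_2)$ (you via the split $H=K+P$ with separate cancellation of the transport and mixed terms, the paper via a single algebraic identity for the quadratic term), and then control the second term through the stress-energy tensor and conclude by Gronwall. Your uniform use of Lemma~\ref{lem44} in the extended space is exactly the paper's invocation of \eqref{divstres} in the Coulomb case and of \eqref{stresst}--\eqref{divt} in the Riesz case, so there is no substantive difference.
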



\begin{proof}
Let $h_i:=\g\ast\mu_i$.
We compute
\begin{eqnarray*}
\partial_tH((\mu_1,u_1),(\mu_2,u_2))&=&\int_{\R^{\d}} |u_1-u_2|^2\partial_t\mu_1+2\int_{\R^{\d}}(u_1-u_2)\cdot(\partial_tu_1-\partial_tu_2)\,\mu_1\\
&&+2\int_{\R^d}(h_1-h_2)(\partial_t\mu_1-\partial_t\mu_2)\\
&=&\int_{\R^{\d}} \nabla|u_1-u_2|^2\cdot \mu_1u_1-2\int_{\R^{\d}}(u_1-u_2)\cdot(u_1\cdot\nabla u_1-u_2\cdot\nabla u_2)\,\mu_1\\
&&-2\int_{\R^{\d}}\nabla(h_1-h_2)\cdot(u_1-u_2)\,\mu_1+2\int_{\R^{\d}}\nabla(h_1-h_2)\cdot(\mu_1u_1-\mu_2u_2).
\end{eqnarray*}
Decomposing $2(u_1-u_2)\cdot(u_1\cdot\nabla u_1-u_2\cdot\nabla u_2)=u_1\cdot\nabla|u_1-u_2|^2+2\nabla u_2:(u_1-u_2)\otimes(u_1-u_2)$, we find after straightforward simplifications,
\begin{multline*}
{\partial_tH((\mu_1,u_1),(\mu_2,u_2))}\\
=-2\int_{\R^d}\mu_1\nabla u_2:(u_1-u_2)\otimes(u_1-u_2)+2\int_{\R^d}u_2\cdot\nabla(h_1-h_2)(\mu_1-\mu_2).
\end{multline*}
In the Coulomb case, we recognize from~\eqref{divstres} in the second right-hand side term the divergence of the stress-energy tensor $[h_1-h_2,h_1-h_2]$, hence
\begin{eqnarray*}
\lefteqn{\partial_tH((\mu_1,u_1),(\mu_2,u_2))}\\
&= &-2\int_{\R^{\d}}\mu_1\nabla u_2:(u_1-u_2)\otimes(u_1-u_2)-\frac1{\cd}\int_{\R^{\d}} u_2\cdot\div[h_1-h_2,h_1-h_2]\\
& \le &C\|\nabla u_2\|_{L^\infty}\int_{\R^{\d}}|u_1-u_2|^2\mu_1+\frac C{\cd}\|\nabla u_2\|_{L^\infty}\int_{\R^{\d}}|\nabla(h_1-h_2)|^2\\
& = & C\|\nabla u_2\|_{L^\infty}H((\mu_1,u_1),(\mu_2,u_2)),
\end{eqnarray*}
and the result follows by Gronwall's lemma.
In the Riesz case, replacing~\eqref{divstres} by~\eqref{divh} and by~\eqref{stresst}--\eqref{divt}, the same conclusion follows.
\end{proof}

Taking inspiration from the above calculations for the weak-strong stability principle, we now turn to the proof of Theorem~\ref{th:MFL-mono}.

\begin{proof}[Proof of Theorem~\ref{th:MFL-mono}]
Let $h:=\g\ast\mu$.
We decompose
\begin{multline*}
\partial_tH_N(Z_N,(\mu,u))=2N\sum_{i=1}^N(u(x_i)-v_i)\cdot\big((\partial_tu)(x_i)+\dot x_i\cdot\nabla u(x_i)-\dot v_i\big)\\+2\sum_{i\ne j}\dot x_i\cdot\nabla \g(x_i-x_j)+2N^2\int_{\R^d}h\partial_t\mu
-2N\sum_{i=1}^N\int_{\R^{\d}}\dot x_i\cdot\nabla \g(x_i-y)\,d\mu(y)-2N\sum_{i=1}^N\int_{\R^{\d}}\g(x_i-\cdot)\,\partial_t\mu.
\end{multline*}
Inserting equations~\eqref{eq:MFL0} and~\eqref{eq:MFL}, we obtain
\begin{multline*}
\partial_tH_N(Z_N,(\mu,u))\\
=2N\sum_{i=1}^N(u(x_i)-v_i)\cdot\Big(-(u\cdot\nabla u)(x_i)-\nabla h(x_i)+v_i\cdot\nabla u(x_i)+\frac1N\sum_{j:j\ne i}\nabla \g(x_i-x_j)\Big)\\
+2\sum_{i\ne j}v_i\cdot\nabla \g(x_i-x_j)+2N^2\int_{\R^{\d}}\nabla h\cdot u\mu\\
-2N\sum_{i=1}^Nv_i\cdot\nabla h(x_i)-2N\sum_{i=1}^N\int_{\R^{\d}}\nabla \g(\cdot-x_i)\cdot u\,d\mu,
\end{multline*}
and hence, after straightforward simplifications,
\begin{multline*}
\partial_tH_N(Z_N,(\mu,u))=-2N\sum_{i=1}^N\nabla u(x_i):(u(x_i)-v_i)\otimes(u(x_i)-v_i)\\
+N^2\iint_{\R^{\d}\times\R^{\d}\setminus\triangle}(u(x)-u(y))\cdot\nabla \g(x-y)\,d(\mu_N-\mu)(x)\,d(\mu_N-\mu)(y).
\end{multline*}
Applying Proposition~\ref{32} to estimate the second right-hand side term, we are led to
\begin{multline*}
\partial_tH_N(Z_N,(\mu,u))\\
\le C\|\nabla u\|_{L^\infty}\Big(H_N(Z_N,(\mu,u))+(1+\|\mu\|_{L^\infty})N^{1+\frac \s\d}+N^{2-\frac{\s+1}2}+ \Big(\frac{N}{\d} \log N \Big) \indic_{\eqref{glog} }\Big)\\
+C\begin{cases}
(1+\|\mu\|_{C^\sigma})\|\nabla u\|_{L^\infty}N^{2-\frac1\d}+\|u\|_{W^{1,\infty}}\|\mu\|_{C^\sigma}N^{1+\frac{\s+1-\sigma}\d},&\text{if $\s\ge \d-1$,}\\
(1+\|\mu\|_{L^\infty})\|\nabla u\|_{L^\infty}N^{2-\frac1\d}+\|u\|_{L^\infty}\|\mu\|_{L^\infty}N^{1+\frac{\s+1}\d}\\
\hspace{4cm}+\|\nabla u\|_{L^\infty}\|\mu\|_{L^\infty}N^{1+\frac \s\d},&\text{if $\s< \d-1$},
\end{cases}
\end{multline*}
and the conclusion follows by Gronwall's lemma. 
\end{proof}

\vskip .5cm
\noindent
\textsc{Mitia Duerinckx}\\
Universit\'e Libre de Bruxelles, boulevard du Triomphe, 1050 Brussels, Belgium,\\
\& Sorbonne Universit\'e, CNRS, UMR 7598, Laboratoire Jacques-Louis Lions, 4 place Jussieu, 75005 Paris, France.\\
Email: {mduerinc@ulb.ac.be}
\vspace{.2cm}

\noindent
\textsc{Sylvia Serfaty}\\
Courant Institute, New York University, 251 Mercer street, New York, NY 10012, USA.\\
Email: {serfaty@cims.nyu.edu}

\end{document}